\newtheorem{proposition}{Proposition}
\newtheorem{lemma}{Lemma}
\newtheorem{theorem}{Theorem}
\newtheorem{conjecture}{Conjecture}
\newenvironment{proof}{\par\noindent\textit{Proof: }}{\hfill$\blacksquare$\par}
\newcommand{\blue}[1]{\textcolor{black}{#1}}
\newcommand{\setR}{\mathbb{R}}
\DeclareMathOperator{\tr}{tr}
\newcommand{\tp}{\mathsf{T}}
\newcommand{\argmax}{\mathop{\rm arg~max}\limits}
\DeclareMathOperator{\erfcx}{erfcx}
\begin{document}
\title{Empirical Bayes Estimation for Lasso-Type Regularizers: Analysis of Automatic Relevance Determination} 

\author{%
 \IEEEauthorblockN{Tsukasa Yoshida and Kazuho Watanabe}
\IEEEauthorblockA{Department of Computer Science and Engineering\\
                   Toyohashi University of Technology\\
                   Japan\\
                   Email: yoshida.tsukasa.fp@tut.jp}
}

\maketitle


\begin{abstract}
This paper focuses on linear regression models with non-conjugate sparsity-inducing regularizers such as lasso and group lasso.
Although the empirical Bayes approach enables us to estimate the regularization parameter, little is known on the properties of the estimators.
In particular, many aspects regarding the specific conditions under which the mechanism of automatic relevance determination (ARD) occurs remain unexplained.
In this paper, we derive the empirical Bayes estimators for the group lasso regularized linear regression models with limited parameters.
It is shown that the estimators diverge under a specific condition, giving rise to the ARD mechanism.
We also prove that empirical Bayes methods can produce the ARD mechanism in general regularized linear regression models and clarify the conditions under which models such as ridge, lasso, and group lasso can do so.
\end{abstract}

The full version of this paper, including the Appendix, is accessible at \url{https://arxiv.org/abs/2501.11280}.

\section{Introduction}
In regression problems, regression functions overfit to the training dataset if the selected model is too complex for the data. 
One of the solutions to suppress overfitting is regularization.
Regularization methods impose a penalty on the complexity of the regression function.
In particular, $L_2$ and $L_1$ regularizers are typical, which regard the norm of model parameters as the penalty.
The linear regression models regularized by $L_2$ and $L_1$ norm are known as ridge and lasso, respectively~\cite{bishop2006pattern,tibshirani1996regression}. 
Since $L_1$ regularization induces sparsity in the parameters, it finds wide applications in sparse modeling. Additionally, many variants of lasso have been proposed, such as elastic net~\cite{zou2005regularization}, group lasso~\cite{yuan2006model}, fused lasso~\cite{tibshirani2005sparsity}, and total variation~\cite{rudin1992nonlinear}.

In these regularization methods, however, the learning result significantly changes depending on the value of the regularization parameter, which controls the strength of regularization in a loss function.
One procedure for estimating hyperparameters is empirical Bayes estimation, which maximizes the marginal likelihood of the hyperparameters~\cite{bernardo1994bayesian, gelman1995bayesian}. 
The ridge model generally does not sparsify the parameters, but does when combined with the empirical Bayes method for setting the regularization parameter.
This sparsification method is known as automatic relevance determination (ARD)~\cite{mackay1992bayesian,tipping2001sparse}.

However, the empirical Bayes estimators of most non-conjugate models, including lasso, group lasso, and most of the practical regularizations, have yet to be derived entirely because non-conjugacy between the likelihood and the prior distribution makes the integration of the marginal likelihood intractable. 
Hence, in practice, approximate empirical Bayes methods, such as variational approximations, are applied to its computation~\cite{girolami2001variational,seeger2008bayesian,yasuda2022empirical,omae2022approximate}.
Since approximate estimation methods perform iterative optimization, the behaviors of the exact and approximate empirical Bayes solutions have yet to be fully elucidated.
Additionally, even in the case of ridge with conjugacy, the empirical Bayes estimators have not been fully discussed, while the estimators have been explicitly obtained only for some specific models~\cite{nakajima2004Generalization}. This solution clearly explains the scheme of ARD. In this line of studies, \cite{nakajima2006generalization,nakajima2007variational} analyzed the asymptotic behavior of ridge regression and its extended models.

Cross-validation is a widely used framework for estimating hyperparameters. Theoretical analysis of cross-validation has been conducted, 
and its behavior has been studied by 
\cite{obuchi2016cross,obuchi2019cross}. 
The prediction error defines the objective function in cross-validation, whereas for empirical Bayes estimation, it encompasses the (negative log) marginal likelihood, which can be interpreted as the cumulative prediction error~\cite{levin1990statistical}. Since these criteria correspond to the Akaike information criterion (AIC) and the Bayesian information criterion (BIC), each is expected to exhibit distinct characteristics.

Recently, an exact empirical Bayes estimator for the lasso in a one-parameter model has been derived by \cite{yoshida2018empirical}. 
This estimator for lasso includes an ARD scheme similar to that observed in ridge.
What kind of models give rise to ARD?
Although ARD is well-known, the conditions for its occurrence remain largely unexplained.
To understand the general characteristics of empirical Bayes methods and ARD,
this study aims to derive empirical Bayes estimators of regularization parameters for regularized linear regression models and analyze them from the perspective of ARD. 
As a result, we elucidate some of the conditions under which ARD occurs.
The following are three main contributions:
\begin{enumerate}
    \item We rigorously derive the empirical Bayes estimator for the group lasso regularized linear regression model. The derivation is performed for models with a limited number of parameters.
    \item We derive the empirical Bayes estimator for the general regularized linear regression model, contingent upon specified conditions. This estimator encompasses those for ridge, lasso, and group lasso, for each of which exact solutions have been derived. Furthermore, this result clarifies the conditions under which the ARD mechanism arises.
    \item We numerically evaluate the marginal likelihood in examples of ridge, lasso, and group lasso to investigate its properties. 
    This analysis suggests that the conditions assumed for the empirical Bayes estimator introduced earlier are likely satisfied, implying that the empirical Bayes estimators for ridge, lasso, and group lasso can be unified.
\end{enumerate}


\section{Problem Statement}
This chapter describes the model constituting the core focus of this paper and the empirical Bayes estimation.

\subsection{Regularized Linear Regression Model}
Given the dataset $\{(\bm{x}_1, y_1), \ldots, (\bm{x}_n, y_n)\}\ ((\bm{x}_i, y_i)\in \setR^d\times \setR, i= 1,\ldots,n)$, 
we consider the following Gaussian model with the parameter $\bm{w} \in \setR^m$ and the prior distribution of $\bm{w}$ with a hyperparameter $\lambda \in \setR_{> 0}$
\footnote{
  This model turns out to be equivalent to the Gauss model $\mathcal{N}(\bm{w}^\tp\bm{\phi}(\bm{x}), \sigma^2)$ with the known variance $\sigma^2$ by standardizing variable transformation, $y':= y/\sigma$ and $\phi':= \phi/\sigma$.
}:
\begin{align}
    p(y|\bm{x}, \bm{w}) &:= \frac{1}{\sqrt{2\pi}} e^{-\frac{1}{2}(y - \bm{w}^\tp\bm{\phi}(\bm{x}))^2}, &
    p(\bm{w}|\lambda) &:= \frac{1}{C} e^{-\lambda h(\bm{w})}, 
    \label{eq: model}
\end{align}
where $\bm{\phi}(\bm{x}) = (\phi_1(\bm{x}), \ldots, \phi_m(\bm{x}))^\tp$ is the feature vector, $\phi_j : \setR^d \to \setR\ (j=1, \ldots, m)$ are features of $\bm{x}$, and $C$ is the normalizing constant (i.e., $C = \int e^{-\lambda h(\bm{w})} d\bm{w}$).
We assume the following two conditions on the function $h$:
\begin{enumerate}
    \item $\forall a \in \mathbb{R},\, \exists \kappa\in\setR_{>0},\, h(a \bm{w}) = |a|^\kappa h(\bm{w})$, 
    \item $\int e^{-h(\bm{w})} d\bm{w} < \infty$.
\end{enumerate}
The first condition is termed absolutely homogeneous of degree $\kappa$, and the second is necessary for the convergence of $C$.
We refer to $h$, possessing the aforementioned properties, as a homogeneous regularization function.
The class of homogeneous regularization functions includes
\begin{itemize}
    \item $h(\bm{w}) = \frac{1}{p}\|\bm{w}\|_p^p$ ($L_p$ regularization, bridge regularization~\cite{frank1993statistical}),
    \item $h(\bm{w}) = \|\bm{w}\|_2$ (group lasso regularization~\cite{yuan2006model} without overlap)
\footnote{
  Empirical Bayesian estimation of $\lambda$ for the group lasso regularizer $\sum_{g\in\mathfrak{G}} \lambda_g \|\bm{w}_g\|_2$ simplifies to analyzing the regularizer $\|\bm{w}\|_2$ in cases without group overlap. This is explained in detail in Prop. \ref{prop: Z_GL Reduction of loss function}. 
},
\end{itemize}
where $\|\cdot\|_p$ is $L_p$ norm and $p \in \mathbb{R}_{\geq 1}$.

The posterior distribution $p(\bm{w} | y^n, \bm{x}^n, \lambda)$ is given by Bayes' theorem:
\begin{align}
    p(\bm{w} | y^n, \bm{x}^n, \lambda) &= \frac{p(y^n|\bm{x}^n, \bm{w}) p(\bm{w} | \lambda)}{Z(\lambda)}, \\ 
    Z(\lambda) &= \int p(y^n|\bm{x}^n, \bm{w}) p(\bm{w} | \lambda)d\bm{w}, \label{eq: marginal likelihood}
\end{align}
where the likelihood is given by $p(y^n|\bm{x}^n, \bm{w}) = \prod_{i=1}^n p(y_i|\bm{x}_i, \bm{w})$ under the i.i.d assumption. 
$Z$ is the marginal likelihood and is crucial in empirical Bayes estimation.
The MAP estimation for the model \eqref{eq: model} reduces to the linear regression problem with a loss function as follows:
\begin{align}
    L(\bm{w}) = \frac{1}{2}\|\bm{y} - \Phi \bm{w}\|_2^2 + \lambda h(\bm{w}), \label{eq: linear regression loss function}
\end{align}
where $\bm{y} = (y_1, \ldots, y_n)^\tp$, and $[\Phi]_{ij} = \phi_{j}(\bm{x}_i)\ (i=1,\ldots,n; j=1,\ldots,m)$. $\Phi \in \setR^{n\times m}$ is known as the design matrix. $[\cdot]_{ij}$ denotes the $(i, j)$th entry of the matrix. 

From the correspondence between the model \eqref{eq: model} and the loss function \eqref{eq: linear regression loss function}, this paper refers to models \eqref{eq: model} with $h(s)=\frac{1}{2}\|\bm{w}\|_2^2, \|\bm{w}\|_1$ and $ \|\bm{w}\|_2$ as ridge, lasso and group lasso model, respectively.
Additionally, we denote $Z$ for ridge, lasso and group lasso as $Z_\mathrm{ridge}, Z_\mathrm{lasso}$ and $Z_\mathrm{GL}$, respectively.


\subsection{Empirical Bayes Estimation}
Empirical Bayes estimation is a method to estimate the hyperparameters.
The estimator of the regularization parameter $\lambda$ is defined by the maximizer of the $Z(\lambda)$:
\begin{align}
    \hat{\lambda} := \argmax_{\lambda} Z(\lambda).
\end{align}
For example, the estimator $\hat{\lambda}_\mathrm{ridge}$ for the ridge model under $\Phi = I$ is given by \cite{nakajima2004Generalization} as follows:
\vspace{-0.25em}
\begin{align}
    \hat{\lambda}_\mathrm{ridge} &= 
        \left\{\begin{array}{cl} 
            \displaystyle \infty & \text{(if $\|\bm{y}\|_2 \leq \sqrt{m}$)}, \\
            \displaystyle \frac{m}{\|\bm{y}\|_2^2 \blue{- m}} & \text{(if $\|\bm{y}\|_2 > \sqrt{m}$)}.
        \end{array}\right. \label{eq: empirical Bayes estimator for ridge}
\end{align}
Subsequently, \cite{yoshida2018empirical} gave $\hat{\lambda}_\mathrm{lasso}$ for the lasso model under $\Phi = I, m = 1$: 
\vspace{-0.25em}
\begin{align}
    \hat{\lambda}_\mathrm{lasso} &= 
        \left\{\begin{array}{cl} 
            \displaystyle \infty & \text{(if $|y| \leq 1$)}, \\
            \displaystyle \lambda^*_\mathrm{lasso} & \text{(if $|y| > 1$)},
        \end{array}\right. \label{eq: empirical Bayes estimator for lasso}
\end{align}
where $\lambda^*_\mathrm{lasso}$ is the unique $\lambda$ satisfying 
\begin{align}
      \frac{dZ_\mathrm{lasso}}{d\lambda}(\lambda^*_\mathrm{lasso}) = 0, \quad 0 < \lambda^*_\mathrm{lasso} < \frac{2}{\sqrt{|y|^2-1}}.
\end{align}
The estimators in Eqs. \eqref{eq: empirical Bayes estimator for ridge} and \eqref{eq: empirical Bayes estimator for lasso} share a common structure, diverging to infinity under certain conditions and converging to finite values otherwise.
This structure induces ARD in MAP estimation, resulting in a sparse MAP solution for $\bm{w}$ (i.e., $\hat{\bm{w}}=\bm{0}$) if $\hat{\lambda} = \infty$ is adopted.
In the derivation of Eqs. \eqref{eq: empirical Bayes estimator for ridge} and \eqref{eq: empirical Bayes estimator for lasso}, it is proven that $Z(\lambda)$ is a quasiconcave function.


\section{Main Theorems}
This section states the two main theorems of this paper and provides their proofs. 
Section \ref{sec: EB group lasso model} and Section \ref{sec: EB regularized linear regression model} present the empirical Bayes estimators for $\lambda$ in the group lasso model and the general regularized model, respectively.

\subsection{Empirical Bayes Estimator for Group Lasso Model} 
\label{sec: EB group lasso model}

In the analysis of group lasso, we assume that $\Phi$ in the model \eqref{eq: model} satisfies the whitening condition for the feature vectors, given by:
\vspace{-0.5em}
\begin{align}
  \Phi^\tp \Phi = nI, \label{eq: whitening condition}
\end{align}
where $I$ is the $m \times m$ identity matrix
\footnote{Even for random data, this condition can be easily satisfied by performing a whitening transformation, which ensures that the data is both uncorrelated and standardized.}.

\begin{proposition}
  \label{prop: Z_GL Reduction of loss function}
  Consider a family of groups $\mathfrak{G} = \{g_1, g_2, \ldots, g_{|\mathfrak{G}|}\}$ and the loss function with hyperparameters $\lambda_g\ (g\in\mathfrak{G})$:
  \begin{align}
      \tilde{L}(\bm{w}) = \frac{1}{2}\|\bm{y} - \Phi \bm{w}\|_2^2 + \sum_{g\in \mathfrak{G}} \lambda_g \|\bm{w}_g\|_2,
  \end{align}
  where $\bm{w}_g$ is the subvector that consists of $w_i\ (i\in g)$, and $i$ represents the indices belonging to group $g$.  
  Under the assumptions of no group overlap (i.e., when $\mathfrak{G}$ is a partition of $\{1,2,\ldots, m\}$) and the whitening condition \eqref{eq: whitening condition},
  empirical Bayesian estimation of $\lambda$ for $\tilde{L}(\bm{w})$ simplifies to analyzing the loss function $L(\bm{w}) = \frac{1}{2}\|\bm{y} - \Phi \bm{w}\|_2^2 + \lambda \|\bm{w}\|_2$.
\end{proposition}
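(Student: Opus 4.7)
The plan is to exploit the whitening condition to diagonalize the quadratic data-fit term and then use the assumption that $\mathfrak{G}$ is a partition to factorize both the prior and the marginal likelihood into independent per-group pieces. Once the marginal likelihood splits as $Z(\{\lambda_g\}_{g\in\mathfrak{G}}) = \prod_{g\in\mathfrak{G}} Z_g(\lambda_g)$ up to a factor independent of all $\lambda_g$, the joint maximization over $\{\lambda_g\}$ decouples into $|\mathfrak{G}|$ separate scalar maximizations, each of the same structural form as the single-hyperparameter problem associated with $L(\bm{w})$.

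First, I would rewrite $\|\bm{y}-\Phi\bm{w}\|_2^2 = \|\bm{y}\|_2^2 - 2\bm{w}^\tp\Phi^\tp\bm{y} + n\|\bm{w}\|_2^2$ using $\Phi^\tp\Phi = nI$, and then complete the square by setting $\bm{z} := \tfrac{1}{n}\Phi^\tp\bm{y}$ to obtain $\|\bm{y}-\Phi\bm{w}\|_2^2 = n\|\bm{w}-\bm{z}\|_2^2 + \|\bm{y}\|_2^2 - n\|\bm{z}\|_2^2$. Since $\mathfrak{G}$ partitions $\{1,\ldots,m\}$, both $\|\bm{w}-\bm{z}\|_2^2 = \sum_{g} \|\bm{w}_g-\bm{z}_g\|_2^2$ and the penalty $\sum_g \lambda_g\|\bm{w}_g\|_2$ split across groups, and the prior $p(\bm{w}|\{\lambda_g\}) \propto \prod_g \exp(-\lambda_g\|\bm{w}_g\|_2)$ factorizes accordingly, with a normalizer $C = \prod_g C_g(\lambda_g)$ in which each $C_g$ depends only on $\lambda_g$ and on the size of group $g$.

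Combining these observations, the marginal likelihood becomes, up to a factor free of all $\lambda_g$, a product over $g\in\mathfrak{G}$ of integrals of the form $Z_g(\lambda_g) = \tfrac{1}{C_g(\lambda_g)}\int \exp\bigl(-\tfrac{n}{2}\|\bm{w}_g-\bm{z}_g\|_2^2 - \lambda_g\|\bm{w}_g\|_2\bigr)\,d\bm{w}_g$. Each $Z_g$ has exactly the structure of the marginal likelihood associated with the single-hyperparameter loss $L(\bm{w}) = \tfrac{1}{2}\|\bm{y}-\Phi\bm{w}\|_2^2 + \lambda\|\bm{w}\|_2$ restricted to group $g$ (with effective target $\bm{z}_g$ and the analogous whitening condition holding by construction), so maximizing the joint marginal likelihood reduces to maximizing each $Z_g(\lambda_g)$ independently. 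The one point requiring care is to verify that the $\lambda_g$-dependence of the prior normalizer does not couple the groups; this follows immediately once one observes that the prior on the disjoint blocks $\bm{w}_g$ is a product, so $C$ factorizes into $\prod_g C_g(\lambda_g)$ with no cross-terms. That is the only nontrivial bookkeeping step, and the rest of the reduction is algebraic.
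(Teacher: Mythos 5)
Your proposal is correct and follows essentially the same route as the paper's proof: complete the square using the whitening condition, decompose the quadratic term and the penalty over the disjoint groups, factorize the marginal likelihood into per-group factors, and observe that the maximization over each $\lambda_g$ then decouples into independent scalar problems. The only cosmetic difference is that the paper makes the per-group identification fully explicit by constructing a matrix $\Phi_g$ with $\Phi_g^\tp \Phi_g = nI_{m_g}$ and a response $\bm{y}_g = \Phi_g \bm{u}_g$ so that each subproblem is literally of the stated form $\frac{1}{2}\|\bm{y}_g-\Phi_g\bm{w}_g\|_2^2+\lambda_g\|\bm{w}_g\|_2$, whereas you assert the analogous whitening structure ``by construction''; on the other hand, you are more careful than the paper about verifying that the prior normalizer factorizes as $\prod_g C_g(\lambda_g)$ and hence does not couple the groups.
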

\begin{proof}
  The term $\frac{1}{2}\|\bm{y} - \Phi \bm{w}\|_2^2$ within $\tilde{L}(\bm{w})$ can be rewritten by completing the square and decomposed into a sum over each $g$.
  This operation derives $L(\bm{w})$, allowing the focus to be placed solely on $L(\bm{w})$ as a result.
  The complete proof of this proposition is given in Appendix \ref{sec: Proof of Prop. Z_GL Reduction of loss function}.
\end{proof}
Proposition \ref{prop: Z_GL Reduction of loss function} indicates that the analysis of the model \eqref{eq: model} with $h(\bm{w}) = \|\bm{w}\|_2$ encompasses the analysis of empirical Bayesian estimation for the model with the loss function $\tilde{L}(\bm{w})$.
Henceforth, this paper will focus exclusively on analyzing model \eqref{eq: 
 model} with $h(\bm{w}) = \|\bm{w}\|_2$.

\begin{theorem} \label{theorem: EB group lasso model}
    The empirical Bayes estimator $\hat{\lambda}_\mathrm{GL}$ for the model \eqref{eq: model} under whitening condition \eqref{eq: whitening condition} and $h(\bm{x}) = \|\bm{x}\|_2$ with $m=1,3$ is unique and is given by
    \begin{align}
      \hat{\lambda}_\mathrm{GL} =
        \left\{\begin{array}{cl}
          \infty                & \left(\mathrm{if}\ \|\tilde{\bm{y}}\|_2 \leq \sqrt{m}\right),\\
          \lambda^*_\mathrm{GL} & \left(\mathrm{if}\ \|\tilde{\bm{y}}\|_2 >    \sqrt{m}\right),
        \end{array}\right. \label{eq: EB group lasso model}
    \end{align}
    where 
    $\tilde{\bm{y}} := \frac{1}{\sqrt{n}}\Phi^\tp \bm{y}$, 
    and $\lambda^*_\mathrm{GL}$ is the unique $\lambda$ satisfying
    \begin{align}
       \frac{dZ_\mathrm{GL}}{d\lambda}(\lambda^*_\mathrm{GL}) = 0, \quad 0 < \lambda^*_\mathrm{GL} < \sqrt{\frac{(m+3)mn}{\|\tilde{\bm{y}}\|_2^2-m}}.
    \end{align}
    Furthermore, $\hat{\lambda}_\mathrm{GL}$ is a function that depends only on $\|\tilde{\bm{y}}\|_2$.
\end{theorem}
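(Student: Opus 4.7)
My plan is to derive $Z_\mathrm{GL}(\lambda)$ in closed form for $m = 1, 3$, classify the two regimes via the sign of $dZ_\mathrm{GL}/d\lambda$, and prove uniqueness of the finite maximizer through quasiconcavity, mirroring the lasso argument of \cite{yoshida2018empirical}.

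First, the whitening condition lets me complete the square in the likelihood exponent,
\begin{align}
\|\bm{y} - \Phi\bm{w}\|_2^2 = n\|\bm{w} - \tilde{\bm{y}}/\sqrt{n}\|_2^2 + \|\bm{y}\|_2^2 - \|\tilde{\bm{y}}\|_2^2,
\end{align}
and computing the prior normalizer $C(\lambda) \propto \lambda^{-m}$ in polar coordinates gives
\begin{align}
Z_\mathrm{GL}(\lambda) \propto \lambda^{m} \int e^{-\frac{n}{2}\|\bm{w} - \tilde{\bm{y}}/\sqrt{n}\|_2^2 - \lambda\|\bm{w}\|_2}d\bm{w}
\end{align}
up to a $\lambda$-independent factor. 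The integrand is $SO(m)$-invariant in $\bm{w}$ once $\tilde{\bm{y}}$ is rotated to a coordinate axis, so $Z_\mathrm{GL}$ depends on $\tilde{\bm{y}}$ only through $\|\tilde{\bm{y}}\|_2$, which settles the last statement of the theorem. For $m = 1$ the integral is already one-dimensional; for $m = 3$ I would pass to spherical coordinates aligned with $\tilde{\bm{y}}$, whence the angular integral evaluates to $4\pi\sinh(\sqrt{n}\|\tilde{\bm{y}}\|_2\, r)/(\sqrt{n}\|\tilde{\bm{y}}\|_2\, r)$ and the remaining radial integral, after writing $\sinh$ as a difference of exponentials, decomposes into Gaussian tail integrals expressible via $\erfcx$. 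In both cases this yields explicit closed forms for $Z_\mathrm{GL}(\lambda)$ and $dZ_\mathrm{GL}/d\lambda$.

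For the threshold, I would analyze the asymptotics at $\lambda \to \infty$. Rescaling $\bm{u} = \lambda\bm{w}$ and Taylor-expanding the likelihood factor, together with the isotropy identity $\int u_i u_j e^{-\|\bm{u}\|_2}d\bm{u} \propto \delta_{ij}$, yields
\begin{align}
Z_\mathrm{GL}(\lambda) = Z_\mathrm{GL}(\infty)\left[1 + \frac{n(m+1)(\|\tilde{\bm{y}}\|_2^2 - m)}{2\lambda^2} + O(\lambda^{-4})\right].
\end{align}
Thus $Z_\mathrm{GL}$ approaches $Z_\mathrm{GL}(\infty)$ from below when $\|\tilde{\bm{y}}\|_2 < \sqrt{m}$ (so the supremum is attained only at infinity, giving $\hat{\lambda}_\mathrm{GL} = \infty$) and from above when $\|\tilde{\bm{y}}\|_2 > \sqrt{m}$ (so a finite maximizer exists). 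The upper bound $\lambda^*_\mathrm{GL} < \sqrt{(m+3)mn/(\|\tilde{\bm{y}}\|_2^2 - m)}$ should follow by checking directly from the $\erfcx$-formula that $dZ_\mathrm{GL}/d\lambda$ is already negative at the claimed bound, paralleling the corresponding step of the lasso proof.

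The main obstacle is uniqueness of $\lambda^*_\mathrm{GL}$. Using the identity $d\log Z_\mathrm{GL}/d\lambda = m/\lambda - \langle\|\bm{w}\|_2\rangle_\mathrm{post}$, where the posterior mean decreases in $\lambda$ (its derivative equals the negative posterior variance of $\|\bm{w}\|_2$), I would aim to show that $\lambda\langle\|\bm{w}\|_2\rangle_\mathrm{post}$ crosses the level $m$ exactly once, establishing quasiconcavity of $Z_\mathrm{GL}$ and hence a unique maximizer. This single-crossing step is where the explicit $\erfcx$ expressions and the restriction $m \in \{1,3\}$ become essential: for even $m$ the angular integrals produce modified Bessel functions rather than elementary hyperbolic functions, and for larger odd $m$ the $\erfcx$-algebra becomes considerably less tractable.
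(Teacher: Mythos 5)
Your reduction of $Z_\mathrm{GL}$ under the whitening condition, the spherical-coordinate evaluation for $m=3$ (the $\sinh$ angular factor leading to $\erfcx$ expressions), the rotation-invariance argument showing dependence only on $\|\tilde{\bm{y}}\|_2$, and the asymptotic coefficient $\tfrac{n(m+1)(\|\tilde{\bm{y}}\|_2^2-m)}{2\lambda^2}$ are all correct; indeed, your use of the large-$\lambda$ expansion to locate the threshold $\sqrt{m}$ is arguably cleaner than the paper's, where the threshold emerges only inside the sign analysis (the polynomial $P_5$ changing character at $M=\sqrt{3/2}$). However, there is a genuine gap: every conclusion you draw is conditional on the quasiconcavity of $Z_\mathrm{GL}$, and that is exactly the step you leave as a plan. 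This step is the entire substance of the paper's proof (its Prop.~\ref{prop: Z_GL quasiconcavity}): the derivative is decomposed through nested factorizations $g'=\phi_1\phi_2$, $\phi_2'=\psi_1\psi_2$ built from five explicit polynomials $P_1,\ldots,P_5$, with zero-counting via Sturm's theorem and a case split on $M$ relative to $\sqrt{3/2}$. Your proposed alternative---show that $\lambda\langle\|\bm{w}\|_2\rangle_{\mathrm{post}}$ crosses the level $m$ exactly once---does not follow from the monotonicity facts you state: $\lambda$ is increasing while $\langle\|\bm{w}\|_2\rangle_{\mathrm{post}}$ is decreasing, so their product has no a priori monotonicity, and single crossing of this product is \emph{equivalent} to the unimodality of $\log Z_\mathrm{GL}$ that you are trying to establish. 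The plan thus restates the difficulty rather than resolving it, and the restriction to $m\in\{1,3\}$ does not by itself make it go through.

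Two smaller issues. First, the boundary case $\|\tilde{\bm{y}}\|_2=\sqrt{m}$: there your $\lambda^{-2}$ coefficient vanishes and the $O(\lambda^{-4})$ remainder has unknown sign, so the conclusion $\hat{\lambda}_\mathrm{GL}=\infty$ at equality requires the monotone-increase statement itself (in the paper, case B-1 covers $M\leq\sqrt{3/2}$ including equality). Second, your route to the bound $\lambda^*_\mathrm{GL}<\sqrt{(m+3)mn/(\|\tilde{\bm{y}}\|_2^2-m)}$---checking that $dZ_\mathrm{GL}/d\lambda<0$ at that point---is legitimate once unimodality is known, but verifying the sign there requires sharp two-sided bounds on $\erfcx$ and is not a routine check; in the paper the bound falls out of the same polynomial analysis, as the maximizer $s_{\phi_2}$ is shown to lie below $s_{P_5}=3/\sqrt{2M^2-3}$, which converts exactly to the stated bound for $m=3$.
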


The estimator at $m=1$ and $n=1$ reduces to Eq. \eqref{eq: empirical Bayes estimator for lasso} since the group lasso model is equivalent to the lasso model in this scenario.
This theorem extends the result of Eqs. \eqref{eq: empirical Bayes estimator for lasso}, and demonstrates that it shares the same structure as Eqs. \eqref{eq: empirical Bayes estimator for ridge} and \eqref{eq: empirical Bayes estimator for lasso}.
In other words, the MAP solution of $\bm{w}$ is sparsified (i.e., $\hat{\bm{w}}=\bm{0}$) by ARD if $\hat{\lambda} = \infty$ is adopted.
The derivation of the theorem proves that $Z_\mathrm{GL}(\lambda)$ is a quasiconcave function, as in the derivation of Eqs. \eqref{eq: empirical Bayes estimator for ridge} and \eqref{eq: empirical Bayes estimator for lasso}.


The proof is derived directly from Prop. \ref{prop: Z_GL quasiconcavity} stated later.
Propositions \ref{prop: Z_GL Reduction of Matrices} and \ref{prop: Z_GL} provide a representation of $Z_\mathrm{GL}$, while Prop. \ref{prop: Z_GL quasiconcavity} elucidates the properties of $Z_\mathrm{GL}$.
Proposition 2 demonstrates that, under the whitening condition, the analysis of $Z_\mathrm{GL}$ is sufficient only for the case where it is the identity matrix. 
Proposition 3 shows that by limiting $m$, $Z_\mathrm{GL}$ can be explicitly expressed as a special function, and its quasiconcavity properties are presented in Prop. 4.

\begin{proposition}
  \label{prop: Z_GL Reduction of Matrices}
  If $\Phi$ satisfies the condition \eqref{eq: whitening condition},
  the marginal likelihood $Z_\mathrm{GL}$ for the model \eqref{eq: model} under $h(\bm{x}) = \|\bm{x}\|_2$ is
  expressed as
  \begin{align}
    Z_\mathrm{GL}(\lambda) 
    \propto \lambda^m \int_{\mathbb{R}^m}  e^{-\frac{1}{2}\|\tilde{\bm{y}} - \bm{w}\|_2^2 - \frac{\lambda}{\sqrt{n}}\|\bm{w}\|_2} d\bm{w}. \label{eq: Z_GL Reduction of Matrices}
  \end{align}
\end{proposition}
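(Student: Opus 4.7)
The plan is to start directly from the definition of the marginal likelihood in \eqref{eq: marginal likelihood} with $h(\bm{w})=\|\bm{w}\|_2$ and manipulate the Gaussian exponent using the whitening condition \eqref{eq: whitening condition}, then absorb every factor that does not depend on $\lambda$ into the proportionality constant. Concretely, I would first isolate the $\lambda$-dependence of the prior normalizer $C=\int e^{-\lambda\|\bm{w}\|_2}\,d\bm{w}$. Because $\|\cdot\|_2$ is absolutely homogeneous of degree $1$, the rescaling $\bm{w}=\bm{u}/\lambda$ gives $C=\lambda^{-m}\int e^{-\|\bm{u}\|_2}\,d\bm{u}$, so $p(\bm{w}\mid\lambda)\propto \lambda^{m}e^{-\lambda\|\bm{w}\|_2}$, which accounts for the prefactor $\lambda^{m}$ in \eqref{eq: Z_GL Reduction of Matrices}.

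Next I would expand the squared-error term and use $\Phi^\tp\Phi=nI$:
\begin{align*}
\tfrac{1}{2}\|\bm{y}-\Phi\bm{w}\|_2^{2}
&=\tfrac{1}{2}\|\bm{y}\|_2^{2}-\bm{y}^\tp\Phi\bm{w}+\tfrac{n}{2}\|\bm{w}\|_2^{2}.
\end{align*}
Completing the square in $\bm{w}$, and recalling $\tilde{\bm{y}}=\tfrac{1}{\sqrt{n}}\Phi^\tp\bm{y}$, this becomes
\begin{align*}
\tfrac{1}{2}\|\bm{y}-\Phi\bm{w}\|_2^{2}
=\tfrac{1}{2}\|\bm{y}\|_2^{2}-\tfrac{1}{2}\|\tilde{\bm{y}}\|_2^{2}
+\tfrac{n}{2}\bigl\|\bm{w}-\tilde{\bm{y}}/\sqrt{n}\bigr\|_2^{2}.
\end{align*}
The first two summands are independent of $\bm{w}$ and $\lambda$, so they contribute only a constant factor to $Z_\mathrm{GL}(\lambda)$.

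The last step is the change of variables $\bm{u}=\sqrt{n}\,\bm{w}$, with $d\bm{w}=n^{-m/2}d\bm{u}$. Under this substitution the quadratic term becomes $\tfrac{1}{2}\|\bm{u}-\tilde{\bm{y}}\|_2^{2}$ and the penalty becomes $(\lambda/\sqrt{n})\|\bm{u}\|_2$, while the Jacobian $n^{-m/2}$ is again a $\lambda$-free constant. Collecting the $\lambda^{m}$ factor from the prior normalizer and dropping all $\lambda$-independent multiplicative constants yields exactly \eqref{eq: Z_GL Reduction of Matrices} after renaming $\bm{u}$ to $\bm{w}$.

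I do not expect any real obstacle: the argument is just completing the square plus two linear rescalings, and the homogeneity of $\|\cdot\|_2$ makes the $\lambda$-scaling of $C$ immediate. The only point requiring a little care is bookkeeping, namely checking that every factor depending on $\bm{y}$, $\Phi$, or $n$ but not on $\lambda$ is legitimately absorbed into $\propto$, so that the remaining $\lambda$-dependent expression is precisely $\lambda^{m}$ times the stated integral over $\tilde{\bm{y}}$.
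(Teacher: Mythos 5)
Your proposal is correct and follows essentially the same route as the paper's proof: extract the $\lambda^m$ factor from the prior normalizer via homogeneity of $\|\cdot\|_2$, complete the square in the Gaussian exponent using $\Phi^\tp\Phi = nI$, and rescale by $\sqrt{n}$ so the penalty becomes $\frac{\lambda}{\sqrt{n}}\|\bm{w}\|_2$ and the mean becomes $\tilde{\bm{y}}$. The only cosmetic difference is that you complete the square directly in terms of $\tilde{\bm{y}}$, whereas the paper routes through $\bm{y}' = (\Phi^\tp\Phi)^{-1}\Phi^\tp\bm{y}$; the steps and bookkeeping are otherwise identical.
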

\begin{proof}
  This proposition can be demonstrated by sequentially applying completing the square, leveraging the assumption \eqref{eq: whitening condition}, and performing the variable transformation $\tilde{\bm{w}} = \sqrt{n}\bm{w}$ to $Z$.
  The complete proof of this proposition is given in Appendix \ref{sec: Proof of Prop. Z_GL Reduction of Matrices}.
\end{proof}

\begin{proposition}
  \label{prop: Z_GL}
  If $\Phi$ satisfies the condition \eqref{eq: whitening condition} with $m=3$, 
  the marginal likelihood $Z_\mathrm{GL}$ for the model \eqref{eq: model} under $h(\bm{x}) = \|\bm{x}\|_2$ is
  expressed as 
  \begin{align}
    Z_{\mathrm{GL}}(\lambda) 
    \propto \begin{cases}
        \Lambda^3 \left((2\Lambda^2+1)\erfcx(\Lambda)-\frac{2}{\sqrt{\pi}}\Lambda\right)
        & (\mathrm{if}\ \tilde{\bm{y}} = \bm{0}),\\
        \Lambda^3 \left((\Lambda+M)\erfcx(\Lambda+M) \right. \\ \left. \hspace{2em} -(\Lambda-M)\erfcx(\Lambda-M)\right)
        & (\mathrm{if}\ \tilde{\bm{y}} \neq \bm{0}),
      \end{cases} \label{eq: Z_GL}
  \end{align}
  where
  $\erfcx x := \frac{2}{\sqrt{\pi}} e^{x^2} \int_x^\infty e^{-t^2} dt,\ 
  \Lambda := \frac{\lambda}{\sqrt{2n}},\ 
  M := \frac{\|\tilde{\bm{y}}\|_2}{\sqrt{2}}$.
\end{proposition}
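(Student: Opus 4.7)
The plan is to evaluate the integral on the right-hand side of \eqref{eq: Z_GL Reduction of Matrices} explicitly for $m=3$ by passing to spherical coordinates, which is natural because the penalty $\|\bm{w}\|_2$ depends only on the radial coordinate. For the generic case $\tilde{\bm{y}} \neq \bm{0}$, I would align the polar axis with $\tilde{\bm{y}}$, so that $\|\tilde{\bm{y}}-\bm{w}\|_2^2 = \|\tilde{\bm{y}}\|_2^2 + r^2 - 2\|\tilde{\bm{y}}\|_2 r\cos\theta$ with $r = \|\bm{w}\|_2$. The azimuthal integration contributes $2\pi$, and the polar integration is elementary: $\int_0^\pi e^{\|\tilde{\bm{y}}\|_2 r\cos\theta}\sin\theta\,d\theta = 2\sinh(\|\tilde{\bm{y}}\|_2 r)/(\|\tilde{\bm{y}}\|_2 r)$. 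What remains is the one-dimensional radial integral $\int_0^\infty r\,\sinh(\|\tilde{\bm{y}}\|_2 r)\,e^{-r^2/2 - (\lambda/\sqrt{n})r}\,dr$ multiplied by a $\lambda$-independent prefactor that can be absorbed into the proportionality.

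Writing $\sinh$ as a difference of two exponentials splits the radial integral into two Gaussian-type integrals of shape $\int_0^\infty r\,e^{-r^2/2 + \alpha r}\,dr$ with $\alpha = \pm\|\tilde{\bm{y}}\|_2 - \lambda/\sqrt{n}$. Completing the square, substituting $u = r-\alpha$, and decomposing $r = u + \alpha$ yields an elementary term plus a tail Gaussian, giving $\int_0^\infty r\,e^{-r^2/2 + \alpha r}\,dr = 1 + \alpha\sqrt{\pi/2}\,\erfcx(-\alpha/\sqrt{2})$. In the normalized variables $\Lambda$ and $M$ introduced in the statement, the two values of $-\alpha/\sqrt{2}$ become $\Lambda - M$ and $\Lambda + M$. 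Subtracting the two integrals cancels the constant $1$'s and leaves $\sqrt{\pi}\bigl[(\Lambda+M)\erfcx(\Lambda+M) - (\Lambda-M)\erfcx(\Lambda-M)\bigr]$. Multiplying by the prefactor $\lambda^3 \propto \Lambda^3$ from \eqref{eq: Z_GL Reduction of Matrices} produces the $\tilde{\bm{y}} \neq \bm{0}$ branch.

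For $\tilde{\bm{y}} = \bm{0}$, the integrand is spherically symmetric and collapses to $4\pi\int_0^\infty r^2 e^{-r^2/2 - (\lambda/\sqrt{n})r}\,dr$. Completing the square with $\beta = \lambda/\sqrt{n}$ and expanding $(u-\beta)^2 = u^2 - 2\beta u + \beta^2$ inside the shifted integral $e^{\beta^2/2}\int_\beta^\infty (u-\beta)^2 e^{-u^2/2}\,du$ produces three tail Gaussian moments on $[\beta,\infty)$, each reducible in closed form to combinations of $e^{-\beta^2/2}$ and $\int_\beta^\infty e^{-u^2/2}\,du = \sqrt{\pi/2}\,\erfc(\beta/\sqrt{2})$. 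Using $\erfcx(\Lambda) = e^{\Lambda^2}\erfc(\Lambda)$ with $\beta/\sqrt{2} = \Lambda$, the combination simplifies to a $\lambda$-independent multiple of $(2\Lambda^2+1)\erfcx(\Lambda) - 2\Lambda/\sqrt{\pi}$, and multiplication by $\Lambda^3$ gives the $\tilde{\bm{y}} = \bm{0}$ branch.

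The main obstacle is purely algebraic bookkeeping: carrying signs and constants correctly through the shift $r \mapsto u = r \pm \alpha$, and converting consistently between $\erfc$, $\erfcx$, and the factors $e^{\pm x^2}$ that arise when pulling the square-completion exponent in and out. As an internal consistency check, the two branches should agree as $M \to 0$, which I would verify by Taylor-expanding $x\,\erfcx(x)$ around $x = \Lambda$ via the identity $(x\,\erfcx(x))' = (1+2x^2)\erfcx(x) - 2x/\sqrt{\pi}$; the leading-order contribution to $(\Lambda+M)\erfcx(\Lambda+M) - (\Lambda-M)\erfcx(\Lambda-M)$ then matches the $\tilde{\bm{y}}=\bm{0}$ expression once the $1/\|\tilde{\bm{y}}\|_2$ prefactor absorbed earlier is taken into account.
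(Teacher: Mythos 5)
Your proposal is correct and follows essentially the same route as the paper's proof: a case split on $\tilde{\bm{y}} = \bm{0}$ versus $\tilde{\bm{y}} \neq \bm{0}$, spherical coordinates with the polar axis aligned to $\tilde{\bm{y}}$ (the paper phrases this as an orthogonal transformation $Q$ followed by polar coordinates), the angular integral producing the $\sinh(\|\tilde{\bm{y}}\|_2 r)/(\|\tilde{\bm{y}}\|_2 r)$ factor, and reduction of the radial integrals to $\erfcx$ terms. The only cosmetic difference is that you evaluate the shifted Gaussian radial integrals by the substitution $u = r - \alpha$ and moment decomposition, where the paper uses integration by parts—these are the same elementary computation.
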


\begin{proof}
  When $\bm{y} = \bm{0}$, this can be easily proven using integration by parts. 
  When $\bm{y} \neq \bm{0}$, the integration over $Z$ in Eq. \eqref{eq: Z_GL Reduction of Matrices} can be simplified by first applying a variable transformation $\tilde{\bm{w}} = Q\bm{w}$, 
  where $Q$ is an orthogonal matrix and first row of $Q$ is $\tilde{\bm{y}}/\|\tilde{\bm{y}}\|_2$,
  followed by a transformation to polar coordinates.
  After this operation, performing partial integration yields an expression involving $\erfcx(x)$.
  The complete proof of this proposition is given in Appendix \ref{sec: Proof of Prop. Z_GL}.
\end{proof}


\begin{proposition}
  \label{prop: Z_GL quasiconcavity}
  If $\Phi$ satisfies the condition \eqref{eq: whitening condition} with $m=1,3$,
  $Z_{\mathrm{GL}}(\lambda)$ are quasiconcave functions, exhibiting monotonically increasing behavior for $\|\tilde{\bm{y}}\|_2 \leq \sqrt{m}$, and is unimodal with a maximum at $\lambda^* < \sqrt{\frac{(m+3)mn}{\|\tilde{\bm{y}}\|_2^2-m}}$ for $\|\tilde{\bm{y}}\|_2 > \sqrt{m}$.
\end{proposition}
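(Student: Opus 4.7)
The plan is to treat $m=1$ and $m=3$ separately. The case $m=1$ is essentially the scalar lasso setting: after the whitening-based reduction of Prop.~\ref{prop: Z_GL Reduction of Matrices}, $Z_{\mathrm{GL}}$ takes the same functional form as $Z_{\mathrm{lasso}}$ in \eqref{eq: empirical Bayes estimator for lasso}, so quasiconcavity, the monotone/unimodal dichotomy around $|\tilde{y}|=1$, and the bound on $\lambda^*$ all follow from the argument of \cite{yoshida2018empirical} once constants are tracked. The substantive work is the case $m=3$, for which the main input is the closed form of Prop.~\ref{prop: Z_GL}.

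For $m=3$ I would write $Z_{\mathrm{GL}}(\lambda) \propto \Lambda^3 v(\Lambda,M)$ (respectively $\Lambda^3 u(\Lambda)$ when $\tilde{\bm{y}}=\bm{0}$), compute $\frac{d \log Z_{\mathrm{GL}}}{d\Lambda} = \frac{3}{\Lambda} + \frac{\partial_\Lambda v(\Lambda,M)}{v(\Lambda,M)}$, and use the identity $\erfcx'(x) = 2x\erfcx(x) - 2/\sqrt{\pi}$ to eliminate derivatives of $\erfcx$. After this reduction the critical-point equation $3 v + \Lambda\, \partial_\Lambda v = 0$ takes the form $p_1(\Lambda,M)\erfcx(\Lambda+M) - p_2(\Lambda,M)\erfcx(\Lambda-M) = q(\Lambda,M)$ with $p_1, p_2, q$ explicit low-degree polynomials; call the left-hand side minus the right-hand side $w(\Lambda, M)$, so that $Z_{\mathrm{GL}}'(\lambda) \propto \Lambda^2\, w(\Lambda, M)$. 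I would then verify two endpoint facts: at $\Lambda \to 0^+$, direct substitution gives $w > 0$, so $Z_{\mathrm{GL}}$ is increasing near the origin; and at $\Lambda \to \infty$, inserting the asymptotic expansion $\erfcx(x) \sim \frac{1}{\sqrt{\pi} x}(1 - \frac{1}{2x^2} + \frac{3}{4x^4} - \cdots)$ and collecting powers of $\Lambda^{-1}$ yields $w(\Lambda, M) = -C(\|\tilde{\bm{y}}\|_2^2 - m)/\Lambda^3 + O(\Lambda^{-5})$ for an explicit constant $C > 0$. This already produces the dichotomy: $w$ retains positive sign at infinity when $\|\tilde{\bm{y}}\|_2 \leq \sqrt{m}$ (monotonically increasing case), and becomes negative when $\|\tilde{\bm{y}}\|_2 > \sqrt{m}$ (at least one interior zero).

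The remaining and hardest ingredient is to prove single-crossing, namely that $w(\cdot, M)$ has at most one zero in $(0, \infty)$; this yields both quasiconcavity and the monotone/unimodal alternative. My planned approach is to show that at any zero $\Lambda_0$ of $w$ one has $\partial_\Lambda w(\Lambda_0, M) < 0$, so $w$ can only cross zero from positive to negative values and hence at most once. After again eliminating $\erfcx'$ via the recurrence, this reduces to a polynomial-weighted inequality between $\erfcx(\Lambda+M)$ and $\erfcx(\Lambda-M)$, for which a Komatsu-type bound such as $\frac{2/\sqrt{\pi}}{x + \sqrt{x^2 + 2}} \leq \erfcx(x) \leq \frac{2/\sqrt{\pi}}{x + \sqrt{x^2 + 4/\pi}}$ appears tight enough to close. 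This is the main obstacle: the integrand defining $Z_{\mathrm{GL}}$ is not jointly log-concave in $(\bm{w}, \lambda)$, so standard log-concavity machinery does not apply and sign control must come from genuinely non-trivial $\erfcx$-inequalities. Once single-crossing is established, the bound $\lambda^* < \sqrt{(m+3)mn/(\|\tilde{\bm{y}}\|_2^2 - m)}$ is obtained by evaluating $w$ at the corresponding $\Lambda = \sqrt{(m+3)m/(2(\|\tilde{\bm{y}}\|_2^2 - m))}$ and applying the same Komatsu bound to confirm $w < 0$ there, which forces $\lambda^*$ to lie strictly below this threshold.
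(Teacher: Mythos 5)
Your skeleton is sound, and its first half matches the paper: the $m=1$ case is delegated to \cite{yoshida2018empirical}; for $m=3$ you write $Z_{\mathrm{GL}}'\propto\Lambda^2 w(\Lambda,M)$ with $w$ a polynomial combination of $\erfcx(\Lambda\pm M)$ obtained via $\erfcx'(x)=2x\erfcx(x)-2/\sqrt{\pi}$; $w>0$ near the origin; and the sign of $w$ at infinity is governed by $\|\tilde{\bm{y}}\|_2^2-m$ (consistent with Prop.~\ref{prop: Z asymptotic expansion}). The gap is that everything then hinges on the single-crossing claim ($\partial_\Lambda w<0$ at every zero of $w$), which you rightly call the hardest ingredient but do not prove: you only assert that, after substituting the zero condition, it becomes a polynomial-weighted inequality between $\erfcx(\Lambda+M)$ and $\erfcx(\Lambda-M)$ that a Komatsu-type bound "appears tight enough" to close. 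That unproven claim essentially \emph{is} the proposition; the endpoint dichotomy alone excludes neither multiple interior extrema nor delivers the bound $\lambda^*<\sqrt{(m+3)mn/(\|\tilde{\bm{y}}\|_2^2-m)}$.

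There are concrete reasons the named tool cannot close the argument. First, the Komatsu bounds you quote are valid for nonnegative arguments, but for $M>\sqrt{3}$ the critical point satisfies $\Lambda^*<3/\sqrt{2M^2-3}<M$, so the required inequality involves $\erfcx(\Lambda-M)$ at negative arguments, where $\erfcx(x)\sim 2e^{x^2}$ and those bounds are invalid; a reflection identity and separate estimates would be needed. Second, the bounds are quantitatively too coarse: the coefficients of $\erfcx(\Lambda\pm M)$ in $w$ grow like $2\Lambda^3$, so even the sharper (lower) Komatsu bound, whose relative error is of order $\Lambda^{-4}$, contributes an absolute error of order $\Lambda^{-2}$ to $w$ (the upper bound, with relative error of order $\Lambda^{-2}$, contributes an $O(1)$ error), whereas the sign-determining term is only $O(\Lambda^{-3})$. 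Since the zero of $w$ escapes to infinity as $\|\tilde{\bm{y}}\|_2\downarrow\sqrt{m}$, the bounds provably lose the sign exactly in the delicate regime (and likewise in the equality case $\|\tilde{\bm{y}}\|_2=\sqrt{m}$, which is decided at yet higher order). The paper avoids special-function inequalities altogether by a differentiate-and-factor cascade: $g'=\phi_1\phi_2$ with $\phi_1>0$ elementary, then $\phi_2'=\psi_1\psi_2$, then $\psi_2'=-\frac{8\Lambda P_1(\Lambda)P_5(\Lambda)}{(P_3(\Lambda))^2}e^{-(\Lambda-M)^2}$, which contains no integral terms at all; the sign of $\psi_2'$ is read off from the polynomial $P_5$, and sign charts are propagated back up through the limits $\psi_2(0)>0$, $\lim_{\Lambda\to\infty}\psi_2=0$, $\lim_{\Lambda\to\infty}\phi_2=0$, with case splits at $M=\sqrt{3/2}$. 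The stated bound on $\lambda^*$ then falls out for free as the zero $s_{P_5}=3/\sqrt{2M^2-3}$ of $P_5$, which is exactly $\sqrt{(m+3)mn/(\|\tilde{\bm{y}}\|_2^2-m)}$ in the original variables. Replacing your Komatsu step with this repeated-differentiation scheme is the natural way to complete your plan.
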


\begin{proof}
  The proof of this proposition is given in Appendix \ref{sec: Proof of Prop. Z_GL quasiconcavity}.
  In this proof, the function's derivative is computed and decomposed into a product of multiple functions. This process is repeated until the properties of the derivative can be analytically determined. This proof can be applied to marginal likelihoods that involve functions where the integration range includes $x$, such as $\erfcx(x)$.
\end{proof}


\subsection{Empirical Bayes Estimator for General Regularization Model} \label{sec: EB regularized linear regression model}
\begin{theorem}\label{theorem: EB regularized linear regression model}
    If the model \eqref{eq: model} satisfies
    \begin{align}
      &E_{p(\bm{w}|\lambda)}[\bm{w}] = \bm{0},
      &E_{p(\bm{w}|\lambda)}[\bm{w}\bm{w}^\mathrm{T}] = \sigma_{\bm{w}}^2 I, \label{eq: condition of asymptotic expansion E[w]=0, E[ww]=sigma^2I}
    \end{align}
    and the marginal likelihood $Z(\lambda)$ is a quasiconcave function,  the empirical Bayes estimate $\hat{\lambda}$ for the model \eqref{eq: model} is unique and is given by
    \begin{align}
      \hat{\lambda} =
        \left\{\begin{array}{cl}
          \infty    & \left(\mathrm{if}\ \|\Phi^\mathrm{T}\bm{y}\|_2 \leq \|\Phi\|_F\right),\\
          \lambda^* & \left(\mathrm{if}\ \|\Phi^\mathrm{T}\bm{y}\|_2 >  \|\Phi\|_F\right),
        \end{array}\right. \label{eq: empirical Bayes estimator for General Regularization Model}
    \end{align}
    where $\sigma_{\bm{w}}^2 \in \mathbb{R}_{\geq 0}$ is a constant, $\|\cdot\|_F$ is the Frobenius norm, and $\lambda^*$ is the unique $\lambda$ satisfying $\frac{dZ}{d\lambda}(\lambda^*) = 0$.
\end{theorem}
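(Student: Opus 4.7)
The strategy is to exploit the assumed quasiconcavity to reduce everything to the asymptotic sign of $Z'(\lambda)$ as $\lambda\to\infty$. A quasiconcave function on $(0,\infty)$ is unimodal, so either $Z$ is monotone non-decreasing---in which case $\hat\lambda=\infty$---or it attains a unique interior maximiser $\lambda^\ast$ satisfying $Z'(\lambda^\ast)=0$. Since $p(\bm{w}\mid\lambda)$ concentrates at $\bm{0}$ as $\lambda\to\infty$, dominated convergence gives the finite limit $Z(\infty):=p(\bm{y}\mid\Phi,\bm{0})=\frac{1}{(2\pi)^{n/2}}e^{-\|\bm{y}\|_2^2/2}$, and the dichotomy is resolved by the sign of the leading correction in $Z(\lambda)-Z(\infty)$: a negative correction forces monotone ascent to $Z(\infty)$, while a positive correction forces an interior peak strictly above $Z(\infty)$.

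To obtain this correction, I would exploit the homogeneity $h(a\bm{w})=|a|^\kappa h(\bm{w})$ via the rescaling $\bm{u}:=\lambda^{1/\kappa}\bm{w}$. A direct computation using $h(\lambda^{-1/\kappa}\bm{u})=\lambda^{-1}h(\bm{u})$ yields $C(\lambda)=\lambda^{-m/\kappa}C(1)$ and pushes the prior forward to the $\lambda$-independent density $\tilde p(\bm{u})\propto e^{-h(\bm{u})}$; writing $\epsilon:=\lambda^{-1/\kappa}$, this gives
\begin{equation*}
    Z(\lambda) = E_{\tilde p}\!\left[\,p(\bm{y}\mid\Phi,\epsilon\bm{u})\,\right].
\end{equation*}
The moment conditions (\ref{eq: condition of asymptotic expansion E[w]=0, E[ww]=sigma^2I}) translate to $E_{\tilde p}[\bm{u}]=\bm{0}$ and $E_{\tilde p}[\bm{u}\bm{u}^{\tp}]=\tilde\sigma^2 I$ with $\tilde\sigma^2:=\lambda^{2/\kappa}\sigma_{\bm{w}}^2$ independent of $\lambda$. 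I would then Taylor expand the Gaussian likelihood to second order in $\epsilon$ about $\bm{w}=\bm{0}$, exchange expectation and expansion (justified by dominated convergence, since the likelihood and its first two derivatives in $\bm{w}$ are uniformly bounded), and use $\nabla^2_{\bm{w}} p(\bm{y}\mid\Phi,\bm{0})=p(\bm{y}\mid\Phi,\bm{0})\bigl(\Phi^{\tp}\bm{y}\bm{y}^{\tp}\Phi-\Phi^{\tp}\Phi\bigr)$ together with $\tr(\Phi^{\tp}\bm{y}\bm{y}^{\tp}\Phi)=\|\Phi^{\tp}\bm{y}\|_2^2$ and $\tr(\Phi^{\tp}\Phi)=\|\Phi\|_F^2$ to arrive at
\begin{equation*}
    Z(\lambda)=Z(\infty)\!\left[1+\frac{\tilde\sigma^2}{2\,\lambda^{2/\kappa}}\bigl(\|\Phi^{\tp}\bm{y}\|_2^2-\|\Phi\|_F^2\bigr)+o\bigl(\lambda^{-2/\kappa}\bigr)\right].
\end{equation*}

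Differentiation shows that the sign of $Z'(\lambda)$ for large $\lambda$ is opposite to that of $\|\Phi^{\tp}\bm{y}\|_2^2-\|\Phi\|_F^2$. Combining with unimodality gives precisely (\ref{eq: empirical Bayes estimator for General Regularization Model}): when $\|\Phi^{\tp}\bm{y}\|_2>\|\Phi\|_F$, $Z$ is eventually decreasing, so its unique peak lies at a finite $\lambda^\ast$ with $Z'(\lambda^\ast)=0$; when $\|\Phi^{\tp}\bm{y}\|_2<\|\Phi\|_F$, $Z$ is eventually increasing and hence, by unimodality, monotone non-decreasing on all of $(0,\infty)$, forcing $\hat\lambda=\infty$.

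The main obstacle is the borderline case $\|\Phi^{\tp}\bm{y}\|_2=\|\Phi\|_F$, where the second-order correction vanishes identically. Resolving this requires pushing the expansion to higher order---which depends on fourth moments of $\tilde p$ that isotropy alone does not determine---or else a supplementary argument (for instance, establishing directly that $Z'(\lambda)\geq 0$ for all $\lambda$ on this boundary locus) in order to place the equality into the $\hat\lambda=\infty$ branch as stated. A secondary technical point, routine but worth flagging, is the rigorous interchange of differentiation and expectation in the asymptotic expansion, which follows from uniform integrable dominations of the Gaussian likelihood and its low-order derivatives under the second-moment hypothesis on $\bm{w}$.
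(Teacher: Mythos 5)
Your proposal is correct and follows essentially the same route as the paper: your rescaling $\bm{u}=\lambda^{1/\kappa}\bm{w}$, second-order expansion, and moment/trace computation reproduce the paper's Proposition~\ref{prop: Z asymptotic expansion} (the asymptotic expansion of $Z(\lambda)$ as $\lambda\to\infty$), and combining the sign of its correction term with the assumed quasiconcavity is exactly how the paper deduces Theorem~\ref{theorem: EB regularized linear regression model}. The borderline case $\|\Phi^\tp\bm{y}\|_2=\|\Phi\|_F$ that you flag is equally unaddressed by the paper, whose proof consists only of citing the expansion and quasiconcavity, so your argument is no less complete than the original.
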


Theorem \ref{theorem: EB regularized linear regression model} is a generalization
of Eqs. \eqref{eq: empirical Bayes estimator for ridge} and \eqref{eq: empirical Bayes estimator for lasso}, 
as well as Thm. \ref{theorem: EB group lasso model}, 
detailing the conditions under which models produce the ARD mechanism.
More specifically, only by checking the condition, $\|\Phi^\mathrm{T}\bm{y}\|_2 \leq \|\Phi\|_F$, we know before starting any estimation procedure if the empirical Bayes estimator diverges and $\bm{w}$ is estimated to be zero.


Theorem \ref{theorem: EB regularized linear regression model} is directly proven from Prop. \ref{prop: Z asymptotic expansion} and the quasiconcavity of $Z(\lambda)$.


\begin{proposition}
  \label{prop: Z asymptotic expansion}
  If the model \eqref{eq: model} satisfies
  the condition \eqref{eq: condition of asymptotic expansion E[w]=0, E[ww]=sigma^2I},
  the marginal likelihood $Z(\lambda)$ has an asymptotic expansion
  \begin{align*}
    Z(\lambda)
    &\!=\! \frac{e^{-\frac{1}{2}\|\bm{y}\|_2^2}}{(2\pi)^\frac{n}{2}} 
       \!\left(\! 1 \!+\! \frac{\sigma_{\bm{w}}^2}{2} \!\left(\|\Phi^\mathrm{T}\bm{y}\|_2^2 \!-\! \|\Phi\|_F^2\right)\! \lambda^{-\frac{2}{\kappa}} \!\right)
       \!+ O \! \left(\lambda^{-\frac{3}{\kappa}}\!\right)\!
  \end{align*}
  \blue{as $\lambda \to \infty$.}
\end{proposition}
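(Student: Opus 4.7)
The plan is to begin from the explicit Gaussian likelihood, expand the quadratic $\|\bm{y}-\Phi\bm{w}\|_2^2$ in the exponent, and factor out the $\bm{w}$-independent prefactor $e^{-\|\bm{y}\|_2^2/2}/(2\pi)^{n/2}$. This gives
\begin{align*}
Z(\lambda) = \frac{e^{-\frac{1}{2}\|\bm{y}\|_2^2}}{(2\pi)^{n/2}}\, E_{p(\bm{w}|\lambda)}\!\left[e^{Y(\bm{w})}\right],
\end{align*}
with $Y(\bm{w}) := \bm{y}^{\mathrm{T}}\Phi\bm{w} - \tfrac{1}{2}\bm{w}^{\mathrm{T}}\Phi^{\mathrm{T}}\Phi\bm{w}$, reducing the problem to a Taylor expansion of this expectation in a small parameter.

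The small parameter is supplied by the homogeneity of $h$. Under the change of variables $\bm{u} := \lambda^{1/\kappa}\bm{w}$, the identity $h(a\bm{w}) = |a|^{\kappa}h(\bm{w})$ turns $p(\bm{w}|\lambda)$ into the $\lambda$-independent density $e^{-h(\bm{u})}/C_0$ with $C_0 := \int e^{-h(\bm{u})}\,d\bm{u}$. Consequently, every polynomial moment of $\bm{w}$ of total degree $k$ carries a prefactor $\lambda^{-k/\kappa}$; combined with the assumed isotropy this is what produces the explicit $\lambda^{-2/\kappa}$ factor in the leading correction while leaving $\sigma_{\bm{w}}^2$ as a genuine constant determined by $h$.

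Next I would Taylor-expand $e^{Y} = 1 + Y + \tfrac{1}{2}Y^2 + \cdots$ and compute the first few expectations. The first-moment condition kills the linear-in-$\bm{w}$ piece, so
\begin{align*}
E[Y] = -\tfrac{1}{2}\tr\!\bigl(\Phi^{\mathrm{T}}\Phi\, E[\bm{w}\bm{w}^{\mathrm{T}}]\bigr) = -\tfrac{\sigma_{\bm{w}}^2}{2}\|\Phi\|_F^2\,\lambda^{-2/\kappa}.
\end{align*}
In $\tfrac{1}{2}E[Y^2]$, only $\tfrac{1}{2}E[(\bm{y}^{\mathrm{T}}\Phi\bm{w})^2] = \tfrac{\sigma_{\bm{w}}^2}{2}\|\Phi^{\mathrm{T}}\bm{y}\|_2^2\,\lambda^{-2/\kappa}$ survives at order $\lambda^{-2/\kappa}$; the cross term $\bm{y}^{\mathrm{T}}\Phi\bm{w}\cdot\bm{w}^{\mathrm{T}}\Phi^{\mathrm{T}}\Phi\bm{w}$ has total degree $3$ in $\bm{w}$ and $(\bm{w}^{\mathrm{T}}\Phi^{\mathrm{T}}\Phi\bm{w})^2$ has degree $4$, both contributing only to the remainder. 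Summing the surviving pieces reproduces exactly the bracketed expression in the claim.

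The main obstacle is the rigorous $O(\lambda^{-3/\kappa})$ remainder bound. After the rescaling, the tail of the Taylor series is $\sum_{k\ge 3}\lambda^{-k/\kappa}$ times polynomial moments of $\bm{u}$ under $e^{-h(\bm{u})}/C_0$. Finiteness of these moments follows from the integrability condition on $h$ together with its homogeneity (which forces super-polynomial decay of $e^{-h}$ along coercive directions), and uniform control in $\lambda$ can be obtained by combining a Lagrange-form remainder with dominated convergence against an integrable envelope for $e^{Y}$ in the rescaled variable. Once this uniform bound is in place, all $k\ge 3$ contributions collapse into $O(\lambda^{-3/\kappa})$ and the asymptotic expansion follows.
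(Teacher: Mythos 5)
Your proposal is correct and follows essentially the same route as the paper's proof: factor out $e^{-\frac{1}{2}\|\bm{y}\|_2^2}/(2\pi)^{n/2}$, use the homogeneity-based rescaling $\lambda^{1/\kappa}\bm{w}$ to make the prior $\lambda$-free, Taylor-expand the exponential, kill the linear term with $E_{p(\bm{w}|\lambda)}[\bm{w}]=\bm{0}$, and evaluate the quadratic term via the trace identity and $E_{p(\bm{w}|\lambda)}[\bm{w}\bm{w}^{\mathrm{T}}]=\sigma_{\bm{w}}^2 I$ to obtain $\sigma_{\bm{w}}^2(\|\Phi^{\mathrm{T}}\bm{y}\|_2^2-\|\Phi\|_F^2)\lambda^{-2/\kappa}$. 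The only difference is cosmetic bookkeeping (you expand $e^{Y}$ and group by moments of $Y$ rather than by powers of $\lambda^{-1/\kappa}$ after rescaling), plus your explicit treatment of the $O(\lambda^{-3/\kappa})$ remainder, which the paper leaves implicit.
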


\begin{proof}
  The variable transformation $\tilde{\bm{w}} = \lambda^{\frac{1}{\kappa}}\bm{w}$ absorbs all instances of $\lambda$ outside the integral into the exponential function within the integral.
  Next, we perform the series expansion of the exponential function. 
  The assumption \eqref{eq: condition of asymptotic expansion E[w]=0, E[ww]=sigma^2I} eliminates the terms involving $\lambda^{-\frac{1}{\kappa}}$.
  Finally, the application of the trace property $\bm{x}^\tp\bm{y} = \tr(\bm{x}\bm{y}^\tp)$ and the linearity of the trace operator to the terms involving $\lambda^{-\frac{2}{\kappa}}$ leads to the derivation of the proposition.
  The complete proof of this proposition is given in Appendix \ref{sec: Proof of Prop. Z asymptotic expansion}.
\end{proof}


Do the ridge, lasso, and group lasso models satisfy the condition \eqref{eq: condition of asymptotic expansion E[w]=0, E[ww]=sigma^2I}?
The following Lemma \ref{lemma: p(w) moment} conveniently serves to check whether the model \eqref{eq: model} satisfies the condition \eqref{eq: condition of asymptotic expansion E[w]=0, E[ww]=sigma^2I}.
According to this lemma, the ridge, lasso, and group lasso models are all asymptotically expandable, as in Prop. \ref{prop: Z asymptotic expansion}.
\begin{lemma} \label{lemma: p(w) moment}
  Let $\mathcal{S}$ denote the set of diagonal matrices in $\setR^{m\times m}$, whose diagonal elements are either $-1$ or $+1$, and let $\mathcal{P}$ be the set of \blue{permutation matrices} in $\setR^{m\times m}$.
  Assume that $h$, as specified in model \eqref{eq: model}, fulfills the conditions:
    \begin{enumerate}
      \item For all $\bm{x} \in \setR^m, S \in \mathcal{S}$, $h(S\bm{x}) = h(\bm{x})$: 
      \blue{$h$ is invariant to elementwise sign reversal of $\bm{x}$.}
      \item For all $\bm{x} \in \setR^m, P_\pi \in \mathcal{P}, h(P_\pi\bm{x}) = h(\bm{x})$: \blue{$h$ is invariant to permutations of the components of $\bm{x}$.}
    \end{enumerate}
  Under these assumptions, Eq. \eqref{eq: condition of asymptotic expansion E[w]=0, E[ww]=sigma^2I} is satisfied.
\end{lemma}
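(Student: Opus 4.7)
The plan is to exploit how the two invariances of $h$ translate into symmetries of the prior $p(\bm{w}|\lambda) = \frac{1}{C} e^{-\lambda h(\bm{w})}$. Every $S \in \mathcal{S}$ and every $P_\pi \in \mathcal{P}$ is orthogonal with $|\det| = 1$, so the substitution $\bm{w} \mapsto S\bm{w}$ (respectively $\bm{w} \mapsto P_\pi\bm{w}$) preserves Lebesgue measure on $\setR^m$. Combined with $h(S\bm{w}) = h(\bm{w}) = h(P_\pi\bm{w})$, the prior density itself is left invariant, and the normalizing constant $C$ is unchanged under the same substitutions. Consequently, for any integrable $f$, $E_{p(\bm{w}|\lambda)}[f(\bm{w})] = E_{p(\bm{w}|\lambda)}[f(S\bm{w})] = E_{p(\bm{w}|\lambda)}[f(P_\pi\bm{w})]$.

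Given this, I would first establish $E[\bm{w}] = \bm{0}$ by choosing, for each coordinate $i$, the matrix $S_i \in \mathcal{S}$ with $-1$ in position $i$ and $+1$ elsewhere, and applying the identity to $f(\bm{w}) = w_i$: this yields $E[w_i] = E[-w_i]$, hence $E[w_i] = 0$. For the off-diagonal entries of $E[\bm{w}\bm{w}^\tp]$, taking $f(\bm{w}) = w_i w_j$ with the same $S_i$ for $i \neq j$ gives $E[w_i w_j] = -E[w_i w_j]$, so $E[w_i w_j] = 0$. For the diagonal entries, permutation invariance applied with the transposition $P_{(ij)} \in \mathcal{P}$ to $f(\bm{w}) = w_i^2$ gives $E[w_i^2] = E[w_j^2]$, hence all diagonal entries share a common nonnegative value, which I will call $\sigma_{\bm{w}}^2$; this yields $E[\bm{w}\bm{w}^\tp] = \sigma_{\bm{w}}^2 I$.

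Since the reasoning is purely a symmetry argument, there is no substantive obstacle; the only point requiring care is to verify that the first and second moments are finite so the change-of-variable manipulations are legitimate. This follows because absolute homogeneity of $h$ together with $\int e^{-h(\bm{w})} d\bm{w} < \infty$ forces $h$ to grow like $\|\bm{w}\|^\kappa$ along every ray, producing super-polynomial decay of $e^{-\lambda h(\bm{w})}$ and hence finite moments of every order.
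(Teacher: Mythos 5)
Your symmetry argument is correct and is essentially the paper's own proof: the paper likewise exploits that every $A \in \mathcal{S}\cup\mathcal{P}$ is orthogonal with unit Jacobian, obtains $E_{p(\bm{w}|\lambda)}[\bm{w}] = -E_{p(\bm{w}|\lambda)}[\bm{w}]$ and $E_{p(\bm{w}|\lambda)}[\bm{w}\bm{w}^\tp] = A^\tp E_{p(\bm{w}|\lambda)}[\bm{w}\bm{w}^\tp] A$ by change of variables, and then reads off vanishing off-diagonal entries from $S_{(i)}$ and equal diagonal entries from the transposition $P_{(i,j)}$. Your phrasing via scalar test functions $w_i$, $w_i w_j$, $w_i^2$, and your use of assumption 1 directly where the paper invokes homogeneity to get $h(-\bm{w})=h(\bm{w})$, are immaterial differences.

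One caveat: your closing claim --- that absolute homogeneity together with $\int e^{-h(\bm{w})}\,d\bm{w} < \infty$ forces super-polynomial decay and hence finite moments of every order --- is false at this level of generality. Homogeneity gives $h(\bm{w}) = \|\bm{w}\|_2^\kappa\, h(\bm{w}/\|\bm{w}\|_2)$, but the coefficient $h(\bm{u})$ on the unit sphere may vanish or be arbitrarily small. In polar coordinates, normalizability amounts to $\int_{S^{m-1}} h(\bm{u})^{-m/\kappa}\, dS(\bm{u}) < \infty$, whereas finiteness of the $k$-th moments requires $\int_{S^{m-1}} h(\bm{u})^{-(m+k)/\kappa}\, dS(\bm{u}) < \infty$, a strictly stronger condition. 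Concretely, for $m=2$, $\kappa=2$, let $h$ restricted to the circle vanish like $|\theta-\theta_0|^{2/3}$ at the four angles forming the orbit of $\theta_0=\pi/4$ under sign flips and coordinate swaps: this $h$ is homogeneous, satisfies both invariance assumptions, and gives a normalizable prior ($h^{-1}$ is integrable on the circle), yet $E[w_i^2]=\infty$ since $h^{-2}$ is not integrable. This does not damage your proof of the lemma itself --- existence of the moments in Eq. \eqref{eq: condition of asymptotic expansion E[w]=0, E[ww]=sigma^2I} is implicitly assumed, and the paper's own proof makes the same tacit assumption (it holds for ridge, lasso, and group lasso, where $h$ is bounded below on the unit sphere) --- but the justification you offer for that finiteness is not valid as stated.
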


\begin{proof}
  Using the symmetry property $h(-\bm{w}) = h(\bm{w})$ and a change of integration variables, 
  we obtain $E_{p(\bm{w}|\lambda)}[\bm{w}] = -E_{p(\bm{w}|\lambda)}[\bm{w}]$. 
  This implies that $E_{p(\bm{w}|\lambda)}[\bm{w}] = 0$.
  Similarly, by using the assumptions of invariance to sign reversal and ingredient replacement, 
  we derive $E_{p(\bm{w}|\lambda)}[\bm{w}\bm{w}^\tp] = S_{(i)} E_{p(\bm{w}|\lambda)}[\bm{w}\bm{w}^\tp] S_{(i)}$ 
  and $E_{p(\bm{w}|\lambda)}[\bm{w}\bm{w}^\tp] = P_{(i,j)} E_{p(\bm{w}|\lambda)}[\bm{w}\bm{w}^\tp] P_{(i,j)}$, respectively, where $S_{(i)}$ is a diagonal matrix with the $i$-th diagonal element equal to $-1$ and all other diagonal elements being $+1$ and $P_{(i,j)}$ is the matrix obtained by swapping the $i$-th and $j$-th columns of the identity matrix.
  These results imply that $E_{p(\bm{w}|\lambda)}[\bm{w}\bm{w}^\tp] = \sigma_{\bm{w}}^2 I$.
  The complete proof of this lemma is given in Appendix \ref{sec: Proof of Lemma p(w) moment}.
\end{proof}

\section{Discussion}
\subsection{Quasiconcavity of $Z(\lambda)$}
The conditions for the empirical Bayes estimator to be expressed in the form of Thm. \ref{theorem: EB regularized linear regression model} include the quasiconcavity of $Z(\lambda)$.
The derivations of Eqs. \eqref{eq: empirical Bayes estimator for ridge} and \eqref{eq: empirical Bayes estimator for lasso}, as well as Thm. \ref{theorem: EB group lasso model}, individually demonstrate the quasiconcavity.
What types of models lead to $Z(\lambda)$ being quasiconcave?
The pattern for which quasiconcavity has been rigorously proven suggests that $Z(\lambda)$ is quasiconcave for a general $\Phi$.

Here, we approximate $Z(\lambda)$ using Monte Carlo integration to observe the rough shape of its graph. 
The Monte Carlo integration is performed using
\begin{align}
  Z(\lambda)
  &= (2\pi)^{-\frac{n}{2}} \int_{\mathbb{R}^m} p_0(\bm{w}) \exp\left(-\frac{1}{2}\|\bm{y}-\lambda^{-\frac{1}{\kappa}}\Phi\bm{w}\|_2^2 \right) d\bm{w} \nonumber\\
  & \approx (2\pi)^{-\frac{n}{2}} \sum_{k=1}^N \exp\left(-\frac{1}{2}\|\bm{y}-\lambda^{-\frac{1}{\kappa}}\Phi\bm{w}_k\|_2^2 \right), \label{eq: monte carlo integration}
\end{align}
where $p_0(\bm{w}) = p(\bm{w}|\lambda)|_{\lambda=1}$, and $\bm{w}_k\ (k=1,\ldots,N)$ are samples from $p_0(\bm{w})$.
\footnote{The first equality is obtained during the proof of Prop. \ref{prop: Z asymptotic expansion} in Appendix \ref{sec: Proof of Prop. Z asymptotic expansion}.}

We calculate the $Z(\lambda)$ values for ridge, lasso, and group lasso.
Sampling of $\bm{w}_k$ from $p_0$ in ridge and lasso regression is performed using a standard Gaussian distribution and a standard Laplace distribution, respectively.
In group lasso, $\bm{w}_k$ is computed from $r_k\in\setR_{\geq0}$ and $\bm{x}_k\in\setR^m$ as $\bm{w}_k = r_k\cdot\frac{\bm{x}_k}{\|\bm{x}_k\|_2}$, where $r_k$ is sampled from a Gamma distribution of order $m$, and $\bm{x}_k$ is drawn from a standard Gaussian distribution~\cite{Boisbunon2012TheCO}.
The constant coefficient $(2\pi)^{-\frac{n}{2}}$ in Eq. \eqref{eq: monte carlo integration} is omitted from the calculation to prevent computational underflow. 
The vector $\bm{y}$ and the matrix $\Phi$ are generated as random variables following the standard Gaussian distribution.

\begin{figure}[tbp]
  \centering
  \includegraphics[width=0.5\textwidth]{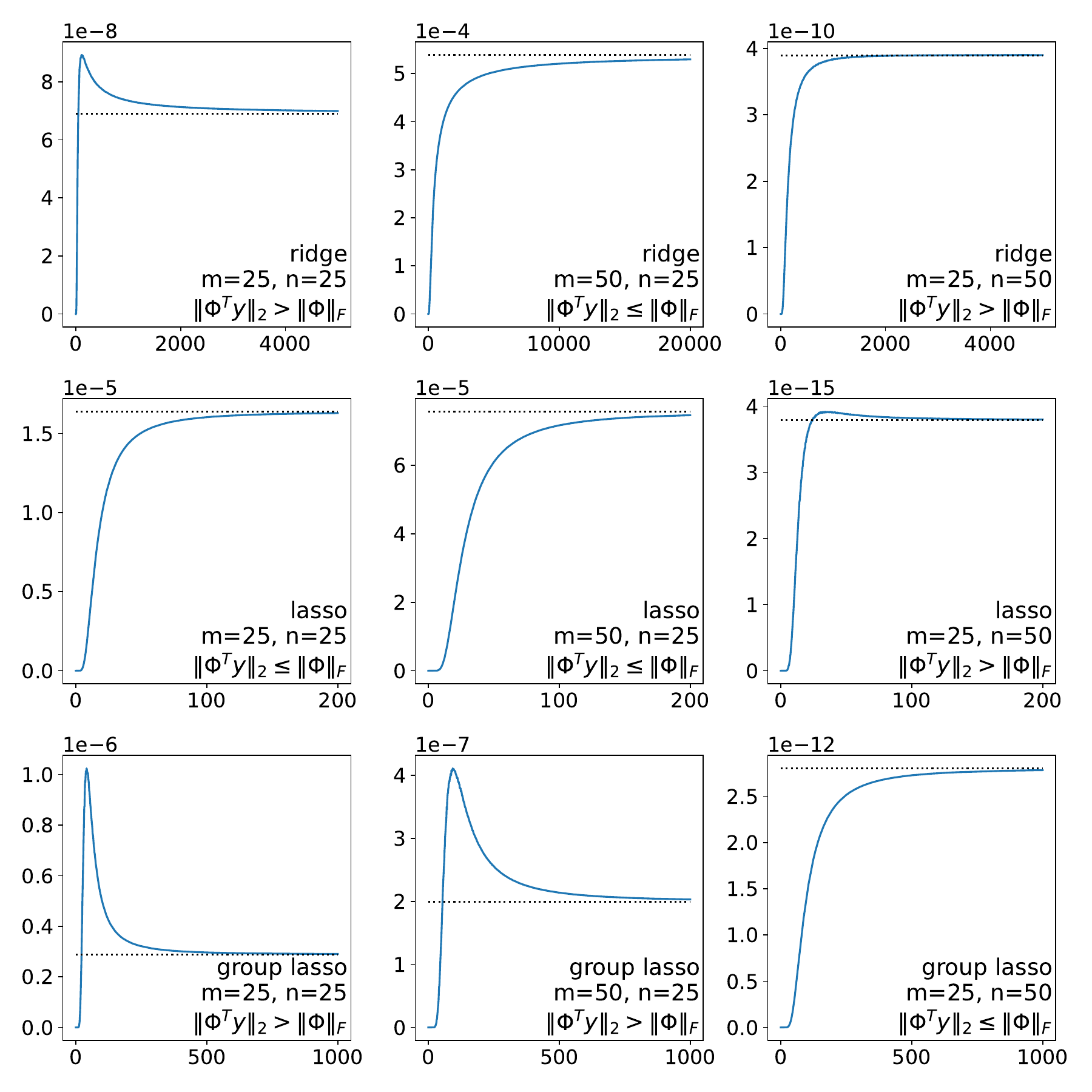}
  \caption{
  Results of the Monte Carlo integration for $Z(\lambda)$. 
  The solid lines in the figure represent $Z(\lambda)$, while the dotted lines represent the asymptotes.
  The upper row corresponds to ridge, the middle row to lasso, and the bottom row to group lasso.
  The bottom-right corner of each graph displays the respective settings of $m$ and $n$, 
  along with the evaluation results of the branch condition in Eq. \eqref{eq: empirical Bayes estimator for General Regularization Model}.
  All the calculation results demonstrate that $Z(\lambda)$ exhibits quasiconcavity, and the branch condition results govern the function's shape.
  }
  \label{fig: monte carlo integration}
\end{figure}

Figure \ref{fig: monte carlo integration} shows the results of $Z(\lambda)$ for various patterns of $m$ and $n$ with $N=5\times10^6$.
Numerical calculations are performed for each model under the conditions $m = n$, $m > n$, and $m < n$, respectively.
In all cases, the shape of the graph indicates that $Z(\lambda)$ exhibits quasiconcavity.
The dotted lines in the figure represent the asymptotes derived from Prop. \ref{prop: Z asymptotic expansion}.
According to the evaluation results of the branch condition in Eq. \eqref{eq: empirical Bayes estimator for General Regularization Model}, 
$Z(\lambda)$ is either a monotonically increasing function that approaches the asymptote from below or a unimodal function that approaches the asymptote from above.
From the results of this numerical computation, we propose the following conjecture.
\begin{conjecture}
  \label{conj: Z quasiconcavity}
  The marginal likelihood $Z(\lambda)$ for ridge, lasso, and group lasso is a quasiconcave function.
\end{conjecture}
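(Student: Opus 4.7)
The plan is to show that $Z(\lambda)$ is unimodal on $(0,\infty)$ by proving $Z'(\lambda)$ has at most one zero. Combined with the asymptotic behavior pinned down in Prop.~\ref{prop: Z asymptotic expansion} (which determines whether $Z(\lambda)$ approaches its limit from above or below as $\lambda\to\infty$), this yields quasiconcavity and matches the branch structure in Thm.~\ref{theorem: EB regularized linear regression model}. A uniform reparametrization is $\mu := \lambda^{-1/\kappa}$, the same substitution used in the proof of Prop.~\ref{prop: Z asymptotic expansion}, which gives
\begin{equation*}
Z(\lambda) \propto f(\mu) := \int_{\mathbb{R}^m} e^{-h(\bm{w})}\exp\!\left(-\tfrac{1}{2}\|\bm{y}-\mu\Phi\bm{w}\|_2^2\right)d\bm{w};
\end{equation*}
since $\mu\mapsto\lambda$ is a strictly monotone bijection of $(0,\infty)$, quasiconcavity of $f$ in $\mu$ is equivalent to that of $Z$ in $\lambda$.

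For ridge the integral is Gaussian and evaluates in closed form. Using the SVD $\Phi=U\Sigma V^\tp$ with singular values $\sigma_1,\ldots,\sigma_r$ ($r=\rank\Phi$) and setting $z_i := (U^\tp\bm{y})_i$, one obtains $\log Z_{\mathrm{ridge}}(\lambda) = \text{const} + \tfrac{r}{2}\log\lambda - \tfrac{1}{2}\sum_{i=1}^{r}\log(\sigma_i^2+\lambda) + \sum_{i=1}^{r}\sigma_i^2 z_i^2/[2(\sigma_i^2+\lambda)]$, whose derivative reduces to $\tfrac{1}{2}\sum_{i=1}^{r}\frac{\sigma_i^2}{(\sigma_i^2+\lambda)^2}\bigl(1+\sigma_i^2/\lambda-z_i^2\bigr)$. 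Each summand is positive for small $\lambda$ and, when $z_i^2>1$, changes sign exactly once at $\lambda=\sigma_i^2/(z_i^2-1)$. The strategy is to clear denominators, turning $Z'(\lambda)=0$ into a polynomial equation in $\lambda$, and apply a Descartes-style sign-count argument whose coefficient structure is forced by the interlacing of the $\sigma_i^2/(z_i^2-1)$, to conclude that the aggregate derivative has at most one zero.

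For lasso and group lasso with general $\Phi$, no closed form for $Z(\lambda)$ is available, and the derivative decomposition of Prop.~\ref{prop: Z_GL quasiconcavity} relies on the $\erfcx$ representation which only exists under whitening with $m\in\{1,3\}$. Two plausible routes are: (a) the Gaussian scale-mixture representation of the (group) Laplace density as a mixture of Gaussians over scale variables, converting $Z_{\mathrm{lasso}}$ or $Z_{\mathrm{GL}}$ into a mixture of generalized-ridge evidences, after which one must show the specific exponential/gamma-type mixing preserves quasiconcavity; or (b) a second-derivative test on $f(\mu)$, verifying $f''(\mu^*)<0$ at every critical point $\mu^*$ by expressing $f''$ as a variance-like quantity under the $\mu$-tilted posterior of $\bm{w}$. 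The hard part will be either route: in (a), mixing does not generally preserve quasiconcavity, so one must exploit the particular structure of the mixing kernel; in (b), the integrand of $f$ is \emph{not} jointly log-concave in $(\bm{w},\mu)$ — a direct Hessian computation exhibits an indefinite block arising from the cross-term $\mu\bm{y}^\tp\Phi\bm{w}$ versus $\mu^2\|\Phi\bm{w}\|_2^2$ — so Prekopa--Leindler cannot be invoked, and the second-derivative test must be recast as a covariance identity under the tilted posterior and then controlled using the log-concavity of $e^{-h(\bm{w})}$. The numerical evidence in Fig.~\ref{fig: monte carlo integration} together with the proven special cases strongly supports the conjecture, but a complete proof for arbitrary $\Phi$ and $m$ remains open and likely demands a genuinely new structural argument combining log-concavity of the likelihood in $\bm{w}$ with the sign-reversal and permutation symmetries of the prior established in Lem.~\ref{lemma: p(w) moment}.
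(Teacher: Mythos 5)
First, a point of calibration: this statement is a \emph{conjecture} in the paper, so there is no paper proof to compare against. The paper's only support for it is the Monte Carlo evidence of Fig.~\ref{fig: monte carlo integration} together with the rigorously proven special cases (Eqs.~\eqref{eq: empirical Bayes estimator for ridge} and \eqref{eq: empirical Bayes estimator for lasso} with $\Phi=I$, and Thm.~\ref{theorem: EB group lasso model} under whitening with $m\in\{1,3\}$). Your closing admission that a complete proof remains open is therefore consistent with the paper's own position. The problem lies in the one route you describe concretely.

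Your ridge formula is correct, but the conclusion you want to extract from it is false, so the ``Descartes-style sign-count argument'' is not a gap to be filled --- it is a step that cannot succeed. You correctly obtain
\begin{equation*}
\frac{d}{d\lambda}\log Z_{\mathrm{ridge}}(\lambda)=\frac{1}{2}\sum_{i=1}^{r}\frac{\sigma_i^2}{(\sigma_i^2+\lambda)^2}\Bigl(1+\frac{\sigma_i^2}{\lambda}-z_i^2\Bigr),
\end{equation*}
and each summand indeed changes sign at most once. But a sum of functions each with one sign change need not have one sign change, and the failure is realized: take $n=m=2$, $\Phi=\mathrm{diag}(1,100)$, $\bm{y}=(10,\sqrt{2})^\tp$, i.e.\ $\sigma_1^2=1$, $z_1^2=100$, $\sigma_2^2=10^4$, $z_2^2=2$. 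Then, up to an additive constant,
\begin{equation*}
\log Z_{\mathrm{ridge}}(\lambda)=\log\lambda-\tfrac{1}{2}\log(1+\lambda)-\tfrac{1}{2}\log(10^4+\lambda)+\frac{50}{1+\lambda}+\frac{10^4}{10^4+\lambda},
\end{equation*}
which evaluates to approximately $41.49$ at $\lambda=0.02$, $-0.83$ at $\lambda=100$, and $+0.16$ at $\lambda=10^4$, decaying to $0$ as $\lambda\to\infty$. Since quasiconcavity requires $\log Z(\lambda_2)\ge\min\{\log Z(\lambda_1),\log Z(\lambda_3)\}$ for $\lambda_1<\lambda_2<\lambda_3$, this $Z_{\mathrm{ridge}}$ is not quasiconcave: it is bimodal, and your derivative has three sign changes ($+,-,+,-$), not one. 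The configuration is robust to perturbation of $(\Phi,\bm{y})$, so it even occurs with positive probability under the Gaussian sampling used in the paper's experiments. In other words, your own computation, pushed one step further than you pushed it, \emph{disproves} the conjecture for ridge in the stated generality of arbitrary $\Phi$.

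What survives is instructive. Under the whitening condition \eqref{eq: whitening condition} all $\sigma_i^2=n$ (and $r=m$), and your sum collapses to the single term $\frac{n}{2(n+\lambda)^2}\bigl(m(1+\frac{n}{\lambda})-\|\bm{z}\|_2^2\bigr)$ with $\bm{z}=U^\tp\bm{y}$, which is strictly decreasing in $\lambda$ and hence changes sign at most once; this proves ridge quasiconcavity in the whitened case and recovers Eq.~\eqref{eq: empirical Bayes estimator for ridge}. So the only possibly provable version of the conjecture is a restricted one (equal or near-equal singular values of $\Phi$), and your routes (a)/(b) for lasso and group lasso --- which you honestly flag as speculative --- should be redirected at that restricted setting; for general $\Phi$ the statement they target already fails at ridge, and since the Laplace prior is a Gaussian scale mixture, there is no reason to expect lasso or group lasso to escape the same disparate-singular-value mechanism.
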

In other words, the empirical Bayes estimators for these models are conjectured to be expressed in the form presented in Thm. \ref{theorem: EB regularized linear regression model}.

\subsection{On the Application of Theorem \ref{theorem: EB regularized linear regression model}}
Our results, Thm. \ref{theorem: EB regularized linear regression model} and Conj. \ref{conj: Z quasiconcavity}, suggest that the empirical Bayes estimator of $\lambda$ may diverge in various models. Theorem \ref{theorem: EB regularized linear regression model} indicates that this divergence occurs automatically when the condition $\|\Phi^\tp\bm{y}\|_2 \leq \|\Phi\|_F$ is satisfied.
For calculating the empirical Bayes estimator, a typical approach involves estimating $\lambda$ using gradient methods on the derivative of the marginal likelihood. However, by applying Thm. 2, when $\|\Phi^\tp\bm{y}\|_2 \leq \|\Phi\|_F$ holds, $\lambda$ can be estimated as $\lambda = \infty$ without performing any gradient computation.
This leads to $\hat{\bm{w}} = \bm{0}$, which in turn allows the MAP estimation of $\bm{w}$ to be performed without any additional computations.
When conducting empirical Bayes estimation for more complex prior distributions than those considered in this study, a local variational approximation that approximates the prior distribution with a Gaussian distribution could be employed. Even in such cases, this theorem remains useful.

\section{Conclusion}
We rigorously derived the empirical Bayes estimators for the group lasso, 
which diverge under specific conditions, thereby leading to the ARD mechanism.
Although we restricted ourselves to the simple models, $m=1$ and $m=3$, to prove that the marginal likelihood is quasiconcave (unimodal) and that ARD occurs, these properties are expected to hold for more general models as well.
We also proved that the empirical Bayes method induces the ARD mechanism on the general regularized linear regression models. 
This elucidates the specific conditions under which models, including ridge, lasso, and group lasso, activate the ARD mechanism.
Our future directions include characterizing the class of regularizers leading to the quasiconcavity of the marginal likelihood.


%
\bibliographystyle{IEEEtran}
\bibliography{isit2025}
%


\newpage 
\appendix
\appendices


\subsection{Proof of Prop. \ref{prop: Z_GL Reduction of loss function}} \label{sec: Proof of Prop. Z_GL Reduction of loss function}
Before proceeding with the proof, we define the following variables.
Let $\bm{u} \coloneqq (\Phi^\tp\Phi)^{-1} \Phi^\tp \bm{y}$, 
and let $\bm{u}_g$ denotes the subvector that consists of $u_i\ (i\in g)$.
The matrix $\Phi_g \in \setR^{n \times m_g}$ is defined such that $\Phi_g^\tp \Phi_g = nI_{m_g}$, where $l$ is an arbitrary positive integer and $I_{m_g}$ is the $m_g\times m_g$ identity matrix.
Finally, let $\bm{y}_g \coloneqq \Phi_g \bm{u}_g$. 
  
The loss function $\tilde{L}(\bm{w})$ can be transformed as follows through the method of completing the square, under the whitening condition \eqref{eq: whitening condition}:
\begin{align*}
  \tilde{L}(\bm{w})
  &= \frac{1}{2}(\bm{w}-\bm{u})\Phi^\tp \Phi(\bm{w}-\bm{u}) + \sum_{g\in\mathfrak{G}} \lambda_g \|\bm{w}_g\|_2 + C \\
  &= \sum_{g\in\mathfrak{G}} \left(\frac{n}{2} \sum_{i \in g} (w_i-u_i)^2 + \lambda_g \|\bm{w}_g\|_2\right) + C \\
  &= \sum_{g\in\mathfrak{G}} \left(\frac{1}{2}\|\bm{y}_g-\Phi_g\bm{w}_g\|_2^2 + \lambda_g \|\bm{w}_g\|_2\right) + C \\
  &= \sum_{g\in\mathfrak{G}} L_g(\bm{w}_g) + C
\end{align*}
where $L_g(w_g):=\frac{1}{2}\|\bm{y}_g-\Phi_g\bm{w}_g\|_2^2 + \lambda_g \|\bm{w}_g\|_2$, and $C$ is a constant that does not depend on $\bm{w}$.

Consequently, the empirical Bayes estimation of $\hat{\lambda}_{g_k}$ reduces to the problem of empirical Bayes estimation using only $L_{g_k}(\bm{w}_{g_k})$:

\begin{align*}
  \hat{\lambda}_{g_k}
  &= \argmax_{\lambda_{g_k}}\ Z\left(\lambda_{g_1}, \ldots, \lambda_{g_{|\mathfrak{G}|}}\right)\\
  &= \argmax_{\lambda_{g_k}} \int e^{-L(\bm{w})} d\bm{w}\\
  &= \argmax_{\lambda_{g_k}} \int e^{-L_{g_k}\left(\bm{w}_{g_k}\right)} d\bm{w}_{g_k} \!\cdot\! \int e^{-\sum_{g\in\mathfrak{G}\setminus{g_k}}\!\! L_g\left(\bm{w}_g\right)} d\bm{w}_g\\
  &= \argmax_{\lambda_{g_k}} \int e^{-L_{g_k}\left(\bm{w}_{g_k}\right)} d\bm{w}_{g_k}.
\end{align*}
In other words, this reduces to the problem of a loss function with a single $\lambda$: $L(\bm{w}) = \frac{1}{2}\|\bm{y} - \Phi \bm{w}\|_2^2 + \lambda \|\bm{w}\|_2$. 


\subsection{Proof of Prop. \ref{prop: Z_GL Reduction of Matrices}} \label{sec: Proof of Prop. Z_GL Reduction of Matrices}
The marginal likelihood can be reformulated as follows by sequentially completing the square, leveraging assumption \eqref{eq: whitening condition}, and performing the variable transformation $\tilde{\bm{w}} = \sqrt{n}\bm{w}$:
\begin{align*}
  Z(\lambda)
  &\propto \lambda^m \!\int_{\mathbb{R}^m}\! \exp\left(-\frac{1}{2}\|\bm{y}-\Phi\bm{w}\|_2^2 -\lambda \|\bm{w}\|_2\right) d\bm{w}\\
  &\propto \lambda^m \!\int_{\mathbb{R}^m}\! \exp\left(\!-\frac{1}{2}(\bm{w}\!-\!\bm{y}')\Phi^\tp\Phi(\bm{w}\!-\!\bm{y}') \!-\!\lambda \|\bm{w}\|_2\!\right) d\bm{w}\\
  &\propto \lambda^m \!\int_{\mathbb{R}^m}\! \exp\left(-\frac{n}{2}\|\bm{w}-\bm{y}'\|_2^2 -\lambda\|\bm{w}\|_2\right) d\bm{w}\\
  &\propto \lambda^m \!\int_{\mathbb{R}^m}\! \exp\left(-\frac{1}{2}\|\bm{w}-\sqrt{n}\bm{y}'\|_2^2 -\frac{\lambda}{\sqrt{n}} \|\bm{w}\|_2\right) d\bm{w},
\end{align*}
where $\bm{y}' := (\Phi^\tp\Phi)^{-1}\Phi^\tp\bm{y} = \frac{1}{n}\Phi^\tp\bm{y}$.


\subsection{Proof of Prop. \ref{prop: Z_GL}} \label{sec: Proof of Prop. Z_GL}
Substituting $m=3$ into Eq. \eqref{eq: Z_GL Reduction of Matrices} in Prop. \ref{prop: Z_GL Reduction of Matrices} yields
\begin{align*}
  Z_\mathrm{GL}(\lambda)
     \propto \lambda^3 
      \int_{\mathbb{R}^m} e^{-\frac{1}{2}\|\bm{w}\|_2^2 -\lambda\|\bm{w}\|_2 + \tilde{\bm{y}}^\mathrm{T}\bm{w}} d\bm{w}.
\end{align*}

If $\tilde{\bm{y}} = \bm{0}$, a polar coordinate transformation transforms the integral into
\begin{align*}
  \int_{\mathbb{R}^m} e^{-\frac{1}{2}\|\bm{w}\|_2^2 -\lambda\|\bm{w}\|_2 + \tilde{\bm{y}}^\mathrm{T}\bm{w}} d\bm{w}
  = 4\pi e^\frac{\lambda^2}{2}\int_0^\infty r^2e^{-\frac{1}{2}\left(r+\lambda\right)^2}dr.
\end{align*}
Then, by applying integration by parts, we obtain the result stated in Prop. \ref{prop: Z_GL}.

If $\tilde{\bm{y}} \neq \bm{0}$, we perform a variable transformation for the integral part, given by $\tilde{\bm{w}}=Q\bm{w}$, where $Q$ is an orthogonal matrix and first row of $Q$ is $\tilde{\bm{y}}/\|\tilde{\bm{y}}\|_2$. 
The polar transformation $(w_1', w_2', w_3') = (r\cos\phi, r\sin\phi \cos\theta, r\sin\phi \sin\theta)$ after such a variable transformation leads to
\begin{align*}
  &\int_{\mathbb{R}^m} e^{-\frac{1}{2}\|\bm{w}\|_2^2 -\lambda\|\bm{w}\|_2 + \tilde{\bm{y}}^\mathrm{T}\bm{w}} d\bm{w} \\
  &= 2\pi \int_0^\infty r^2 e^{-\frac{1}{2}r^2 -\lambda r} \left(\int_0^\pi e^{\|\tilde{\bm{y}}\|_2 r\cos\phi}\sin\phi d\phi \right)dr \\
  &= 2\pi \int_0^\infty r^2 e^{-\frac{1}{2}r^2 -\lambda r} \left(\frac{1}{\|\tilde{\bm{y}}\|_2 r}\left(e^{\|\tilde{\bm{y}}\|_2 r} -e^{-\|\tilde{\bm{y}}\|_2 r}\right)\right)dr\\
  &= \frac{2\pi}{\|\tilde{\bm{y}}\|_2}\left(
    e^{\frac{1}{2}\left(\lambda-\|\tilde{\bm{y}}\|_2\right)^2}
    \int_0^\infty re^{-\frac{1}{2}\left(r+\lambda-\|\tilde{\bm{y}}\|_2\right)^2}dr\right.\\
  &\left.\hspace{6em} -e^{\frac{1}{2}\left(\lambda+\|\tilde{\bm{y}}\|_2\right)^2}
    \int_0^\infty re^{-\frac{1}{2}\left(r+\lambda+\|\tilde{\bm{y}}\|_2\right)^2}dr
  \right).
\end{align*}
Finally, by applying integration by parts, we have the result of the Prop. \ref{prop: Z_GL}.


\subsection{Proof of Prop. \ref{prop: Z_GL quasiconcavity}} \label{sec: Proof of Prop. Z_GL quasiconcavity}
Since the case $m=1$ reduces to the discussion in \cite{yoshida2018empirical}, we discuss the case for $m=3$.
We prove $Z_\mathrm{GL}(\lambda)$ in Prop. \ref{prop: Z_GL} is a quasiconcave function on $\lambda>0$ by considering cases where (A) $\tilde{\bm{y}}=\bm{0}$ and (B) $\tilde{\bm{y}}\neq\bm{0}$.

(A) If $\tilde{\bm{y}} = \bm{0}$, 
we define the following functions and compute their derivatives:
\begin{align*}
  f(\Lambda) &\coloneqq \Lambda^3 \left((2\Lambda^2+1)\erfcx(\Lambda)-\frac{2}{\sqrt{\pi}}\Lambda\right),\\
  f'(\Lambda) &= \Lambda^2(4\Lambda^4+12\Lambda^2+3)\erfcx\Lambda-\frac{2}{\sqrt{\pi}}\Lambda^3(2\Lambda^2+5)\\
  &= \frac{2}{\sqrt{\pi}}\Lambda^2(4\Lambda^4+12\Lambda^2+3) u(\Lambda), \\
  u(\Lambda) &\coloneqq \int_\Lambda^\infty e^{-t^2}dt - \frac{\Lambda(2\Lambda^2+5)}{4\Lambda^4+12\Lambda^2+3}e^{-\Lambda^2}\\
  u'(\Lambda) &= -\frac{24}{(4\Lambda^4+12\Lambda^2+3)^2}e^{-\Lambda^2}
\end{align*}
The proof is achieved by verifying that $f$ is a quasiconvex function for $\Lambda > 0$.
Since $u'(\Lambda) < 0$ and $\lim_{\Lambda\to\infty} u(\Lambda) = 0$, $u(\Lambda) > 0$. This means $f'(\Lambda) > 0$, and thus $f(\Lambda)$ is monotonically increasing.
Consequently, $f$ is quasiconcave.

(B) If $\tilde{\bm{y}} \neq \bm{0}$,
by using the five polynomials of $\Lambda$,
\begin{align*}
  P_1(\Lambda) &\coloneqq 2\Lambda^3 +4M\Lambda^2 +2(M^2+2)\Lambda +3M,\\
  P_2(\Lambda) &\coloneqq 2\Lambda^3 -4M\Lambda^2 +2(M^2+2)\Lambda -3M,\\
  P_3(\Lambda) &\coloneqq 4\Lambda^6 -4(2M^2-3)\Lambda^4 \\&\hspace{2em} +(4M^4-4M^2+15)\Lambda^2 -6(2M^2+1),\\
  P_4(\Lambda) &\coloneqq 2\Lambda^5 +2M\Lambda^4 -(2M^2-5)\Lambda^3 \\&\hspace{2em} -(2M^2-3)M\Lambda^2 +6\Lambda +6M,\\
  P_5(\Lambda) &\coloneqq (2M^2-3)\Lambda^2 -9,
\end{align*}
we define the following functions and compute their derivatives:
\begin{align*}
    g(\Lambda) 
       &\coloneqq \textstyle \Lambda^3 (\Lambda+M)\erfcx(\Lambda+M)\\ 
       &\hspace{2em} -\Lambda^3(\Lambda-M)\erfcx(\Lambda-M),\\
    g'(\Lambda) 
       &= \Lambda^2 P_1(\Lambda) \mathrm{erfcx}(\Lambda+M) \\
       &\hspace{2em} +\Lambda^2 P_2(\Lambda) \mathrm{erfcx}(\Lambda-M) - \frac{4}{\sqrt{\pi}}M\Lambda^3\\
       &= \phi_1(\Lambda) \phi_2(\Lambda), \\
    \phi_1(\Lambda) 
       &\coloneqq \frac{2}{\sqrt{\pi}}\Lambda^2 P_1(\Lambda) e^{(\Lambda+M)^2}, \\
    \phi_2(\Lambda)
       &\coloneqq \int_{\Lambda+M}^\infty e^{-t^2}dt
        -\frac{P_2(\Lambda)}{P_1(\Lambda)} e^{-4M\Lambda} \int_{\Lambda-M}^\infty e^{-t^2}dt \\
       &\hspace{2em} -\frac{2M\Lambda}{P_1(\Lambda)} e^{-(\Lambda+M)^2},\\
    \phi_2'(\Lambda)
      &=\frac{4M P_3(\Lambda)}{(P_1(\Lambda))^2} e^{-4M\Lambda} \int_{\Lambda-M}^\infty e^{-t^2}dt\\
      &\hspace{2em} -\frac{4M P_4(\Lambda)}{(P_1(\Lambda))^2} e^{-(\Lambda+M)^2}\\
      &= \psi_1(\Lambda) \psi_2(\Lambda),\\
    \psi_1(\Lambda) &\coloneqq \frac{4M P_3(\Lambda)}{(P_1(\Lambda))^2}e^{-4M\Lambda}, \\
    \psi_2(\Lambda)
      &\coloneqq \int_{\Lambda-M}^\infty e^{-t^2}dt
        -\frac{P_4(\Lambda)}{P_3(\Lambda)}e^{-(\Lambda-M)^2}, \\
    \psi_2'(\Lambda) &= -\frac{8\Lambda P_1(\Lambda) P_5(\Lambda)}{(P_3(\Lambda))^2}e^{-(\Lambda-M)^2}.
\end{align*}
The proof is achieved by verifying that $g$ is a quasiconvex function for $\Lambda > 0$ and $M > 0$.
All functions mentioned, except $\psi_2$ and $\psi_2'$, are continuous for all $\Lambda > 0$ and $M > 0$ since $P_1(\Lambda) > 0$.
$\psi_2$ and $\psi_2'$ have singularities at the zero point of $P_3$.
It can be demonstrated that, under the condition $M>0$, 
$P_3$ consistently possesses a single zero point, $s_{P_3}$, in the range $\Lambda>0$. $s_{P_3}$ satisfies $P_4(s_{P_3}) > 0$
\footnote{
  The existence of $s_{P_3}$ can be established by applying Sturm's theorem.
  To prove that $P_4(s_{P_3}) > 0$, 
  given the transformation $P_4(\Lambda) = \{2\Lambda^4-(2M^2-3)\Lambda^2+6\}(\Lambda+M)+2\Lambda^3$, 
 it suffices to show that $s_{P_3}^2 < s_c = \frac{1}{4}\bigl\{(2M^2-3)-\sqrt{(2M^2-3)-48}\bigr\}$, where $s_c$ is the point at which $2\Lambda^4-(2M^2-3)\Lambda^2+6=0$.
  Considering $P_3(\Lambda)< 0$ for $\Lambda \in (0,\;s_{P_3})$ and $P_3(\Lambda)> 0$ for $\Lambda \in (s_{P_3},\;\infty)$, we only need to confirm that $P_3(s_c) > 0$.
}.
The properties of $P_5$ change at the boundary
$M=\sqrt{\frac{3}{2}}$.
We further categorize our analysis into two cases: (B-1) $M \leq \sqrt{\frac{3}{2}}$ and (B-2) $M > \sqrt{\frac{3}{2}}$.

(B-1) If $M \leq \sqrt{\frac{3}{2}}$, then $\psi_2'>0$, 
given that both $P_1(\Lambda)>0$ and $P_5(\Lambda)<0$.
This fact, combined with $\psi_2(0)>0$ and $\lim_{\Lambda\to\infty}\psi_2=0$ yields 
$\psi_2(\Lambda)>0$ for $\Lambda \in (0,\;s_{P_3})$
and $\psi_2(\Lambda)<0$ for $\Lambda \in (s_{P_3},\;\infty)$.
On the other hand, based on the sign chart of $P_3$, 
it follows that 
$\psi_1(\Lambda)<0$ for $\Lambda \in [0,\;s_{P_3})$,
$\psi_1(\Lambda)>0$ for $\Lambda \in (s_{P_3},\;\infty)$,
and $\psi_1(s_{P_3}) = 0$.
As a result, the sign charts of $\psi_1$ and $\psi_2$, together with $P_4(s_{P_3})>0$, show that $\phi_2'(\Lambda)=\psi_1(\Lambda)\psi_2(\Lambda) < 0$ for all $\Lambda > 0$.
Hence, $\phi_2(\Lambda)>0$ follows from $\lim_{\Lambda\to\infty}\phi_2'(\Lambda)=0$, 
and $g'(\Lambda) = \phi_1(\Lambda)\phi_2(\Lambda) > 0$ is ensured, 
given that $\phi_1(\Lambda) > 0$ for all $\Lambda > 0$.
Therefore, $g$ is monotonically increasing; thus, it is quasiconcave.

(B-2) If $M > \sqrt{\frac{3}{2}}$, then $P_5$ possesses a single zero point, $s_{P_5} = \frac{3}{\sqrt{2M^2-3}}$, within the range $\Lambda>0$.
The calculation of $P_3(s_{P_5})$ indicates that $P_3(s_{P_5})>0$, implying that $s_{P_3} < s_{P_5}$.
The sign charts of $P_3$ and $P_5$ demonstrates that 
$\psi_2'(\Lambda)>0$ for $\Lambda \in (0,\;s_{P_5})$, 
$\psi_2'<0$ for $\Lambda \in (s_{P_5},\;\infty)$,
and $\psi_2'(s_{P_5})=0$.
These facts, combined with $\psi_2(0)>0,\lim_{\Lambda\to\infty} \psi_2(\Lambda) = 0,P_4(s_{P_3})>0$ yields 
$\psi_2(\Lambda)>0$ for $\Lambda \in (0,\;s_{P_3})$, 
$\psi_2(\Lambda)<0$ for $\Lambda \in (s_{P_3},\;s_{\psi_2})$, 
and $\psi_2(\Lambda)>0$ for $\Lambda \in (s_{\psi_2},\;\infty)$, 
where $s_{\psi_2} \in (s_{P_3},\;s_{P_5})$ is a unique point satisfying $\psi_2(s_{\psi_2}) = 0$.
The intermediate value theorem guarantees the existence of $s_{\psi_2}$.
Transitioning to the analysis of $\psi_1$, based on the sign chart of $P_3$, it follows that 
$\psi_1(\Lambda)<0$ for $\Lambda \in (0,\;s_{P_3})$,
$\psi_1(\Lambda)>0$ for $\Lambda \in (s_{P_3},\;\infty)$,
and $\psi_1(s_{P_3})=0$.
The sign chart of $\psi_1$ and $\psi_2$, 
given that $P_4(s_{P_3})>0$ and $\phi_2(\Lambda)=\psi_1(\Lambda)\psi_2(\Lambda)$,
reveals that 
$\phi_2'(\Lambda)<0$ for $\Lambda \in (0,\;s_{\psi_2})$,
$\phi_2'(\Lambda)>0$ for $\Lambda \in (s_{\psi_2},\;\infty)$,
and $\phi_2'(s_{\psi_2})=0$.
Through the sign chart of $\phi_2'$ and $\lim_{\Lambda\to\infty}\phi_2'(\Lambda)=0$,
we establish that 
$\phi_2(\Lambda)>0$ for $\Lambda \in (0,\;s_{\phi_2})$ 
and $\phi_2(\Lambda)<0$ for $\Lambda \in (s_{\phi_2},\;\infty)$, 
where $s_{\phi_2} \in (0,\;s_{\psi_2})$ is a unique point satisfying $\phi_2(s_{\phi_2}) = 0$.
The existence of $s_{\psi_2}$ is guaranteed by the intermediate value theorem, akin to that of $s_{\psi_2}$.
From $\phi_1(\Lambda)>0$ and $g' = \phi_1(\Lambda)\phi_2(\Lambda)$, we see that 
$g'(\Lambda)>0$ for $\Lambda \in (0,\;s_{\phi_2})$,
$g'(\Lambda)<0$ for $\Lambda \in (s_{\phi_2},\;\infty)$,
and $g'(s_{\phi_2})=0$.
Therefore, $g$ is 
strictly monotonically increasing for $\Lambda \in (0,\;s_{\phi_2})$ 
and strictly monotonically decreasing for $\Lambda \in (s_{\phi_2},\;\infty)$, thus, quasiconcave.
According to the proof, $s_{\phi_2}$ uniquely satisfies $g'=0$, and is bounded above by $s_{P_5}$.


\subsection{Proof of Prop. \ref{prop: Z asymptotic expansion}} \label{sec: Proof of Prop. Z asymptotic expansion}
By substituting the model \eqref{eq: model} into the definition of the marginal likelihood \eqref{eq: marginal likelihood} and applying the variable transformation $\tilde{\bm{w}} = \lambda^{\frac{1}{\kappa}}\bm{w}$, we have
\begin{align*}
  Z(\lambda)
  &= \frac{1}{(2\pi)^\frac{n}{2}\int_{\mathbb{R}^m} e^{-h(\bm{w})}d\bm{w}} \\
  &\hspace{2em} \cdot \int_{\mathbb{R}^m} \exp\left(-\frac{1}{2}\|\bm{y}-\lambda^{-\frac{1}{\kappa}}\Phi\bm{w}\|_2^2 -h(\bm{w})\right) d\bm{w}\\
  &= \frac{\exp\left(-\frac{1}{2}\|\bm{y}\|_2^2\right)}{(2\pi)^\frac{n}{2}} \\
  &\hspace{1em} \cdot \int_{\mathbb{R}^m} p_0(\bm{w}) \exp\left(\lambda^{-\frac{1}{\kappa}} \bm{y}^\mathrm{T}\Phi\bm{w}-\frac{1}{2}\lambda^{-\frac{2}{\kappa}}\|\Phi\bm{w}\|_2^2\right) d\bm{w},
\end{align*}
where $p_0(\bm{w}) = p(\bm{w}|\lambda)|_{\lambda=1}$.
The exponential function within this integral can be expanded into a series: 
\begin{align*}
  &e^{\lambda^{-\frac{1}{\kappa}} \bm{y}^\mathrm{T}\Phi\bm{w}-\frac{1}{2}\lambda^{-\frac{2}{\kappa}}\|\Phi\bm{w}\|_2^2}\\
  &= 1 + \lambda^{-\frac{1}{\kappa}} \bm{y}^\mathrm{T}\Phi\bm{w}  
       + \frac{\lambda^{-\frac{2}{\kappa}}}{2} \! \left((\bm{y}^\mathrm{T}\Phi\bm{w})^2 - \|\Phi\bm{w}\|_2^2\right)
       + O \! \left(\lambda^{-\frac{3}{\kappa}}\right).
\end{align*}
The series expansion, combined with the condition $E_{p(\bm{w}|\lambda)}[\bm{w}] = \bm{0}$, gives
\begin{align*}
  &\int_{\mathbb{R}^m} p_0(\bm{w}) \exp\left(\lambda^{-\frac{1}{\kappa}} \bm{y}^\mathrm{T}\Phi\bm{w}-\frac{1}{2}\lambda^{-\frac{2}{\kappa}}\|\Phi\bm{w}\|_2^2\right) d\bm{w}\\
  &= 1 
    + \frac{\lambda^{-\frac{2}{\kappa}}}{2} \! \int_{\mathbb{R}^m} p_0(\bm{w}) \! \left((\bm{y}^\mathrm{T}\Phi\bm{w})^2 - \|\Phi\bm{w}\|_2^2\right) d\bm{w} 
    + O \! \left(\lambda^{-\frac{3}{\kappa}}\right).
\end{align*}
Utilizing the trace property $\bm{x}^\tp\bm{y}=\tr(\bm{x}\bm{y}^\tp)$ and the condition for $E_{p(\bm{w}|\lambda)}[\bm{w}\bm{w}^\mathrm{T}]$, 
the integral in the second term is manipulated as follows:
\begin{align*}
  &\int_{\mathbb{R}^m} p_0(\bm{w}) \left((\bm{y}^\mathrm{T}\Phi\bm{w})^2 - \|\Phi\bm{w}\|_2^2\right) d\bm{w}\\
  &= \int_{\mathbb{R}^m} p_0(\bm{w}) \tr\Bigl(\left((\bm{y}^\mathrm{T}\Phi)^\mathrm{T}(\bm{y}^\mathrm{T}\Phi) - \Phi^\mathrm{T}\Phi\right)\bm{w}\bm{w}^\mathrm{T}\Bigr) d\bm{w} \\
  &= \tr\left(\left((\bm{y}^\mathrm{T}\Phi)^\mathrm{T}(\bm{y}^\mathrm{T}\Phi) - \Phi^\mathrm{T}\Phi\right)\int_{\mathbb{R}^m} p_0(\bm{w}) \bm{w}\bm{w}^\mathrm{T}d\bm{w} \right)  \\
  &= \sigma_{\bm{w}}^2 \left(\|\Phi^\mathrm{T}\bm{y}\|_2^2 - \|\Phi\|_F^2\right).
\end{align*}
This completes the proof of Prop. \ref{prop: Z asymptotic expansion}.


\subsection{Proof of Lemma \ref{lemma: p(w) moment}} \label{sec: Proof of Lemma p(w) moment}
We first show the equality of $E_{p(\bm{w}|\lambda)}[\bm{w}]$ in Eq. \eqref{eq: condition of asymptotic expansion E[w]=0, E[ww]=sigma^2I}.
Under absolute homogeneity of $h$, $h(-\bm{x}) = |-1|^\kappa h(\bm{x}) = h(\bm{x})$. 
Applying the integral transformation $\tilde{\bm{w}} = -\bm{w}$ to $E_{p(\bm{w}|\lambda)}[\bm{w}]$ gives
\begin{align}
  E_{p(\bm{w}|\lambda)}[\bm{w}] 
  &= \int_{\setR^m} -\bm{w} \cdot 
     \frac{e^{-\lambda h(-\bm{w})}}{C}
     d\bm{w} \nonumber\\
  &= \int_{\setR^m} -\bm{w} \cdot 
     \frac{e^{-\lambda h(\bm{w})}}{C}
     d\bm{w} \nonumber\\
  &= - E_{p(\bm{w}|\lambda)}[\bm{w}] \label{eq-inproof: E_prior[w]},
\end{align}
where $C := \int_{\setR^m} e^{-\lambda h(\bm{w})}d\bm{w}$.
After transposing and rearranging the equation, we obtain $2E_{p(\bm{w}|\lambda)}[\bm{w}]= \bm{0}$, that is, $E_{p(\bm{w}|\lambda)}[\bm{w}]= \bm{0}$.

We next confirm the equality of $E_{p(\bm{w}|\lambda)}[\bm{w}\bm{w}^\mathrm{T}]$ in Eq. \eqref{eq: condition of asymptotic expansion E[w]=0, E[ww]=sigma^2I}.
Any matrix $A \in \mathcal{S}\cup\mathcal{P}$ satisfies $A^{-1} = A^T$, which means it is an orthogonal matrix.
Given the condition specified in Lemma 2, where $h(A\bm{x}) = h(\bm{x})$, a variable transformation $\tilde{\bm{w}} = A\bm{w}$
leads to the following equation, which is derived using a procedure analogous to that presented in Eq. \eqref{eq-inproof: E_prior[w]}:
\begin{align}
  E_{p(\bm{w}|\lambda)}[\bm{w}\bm{w}^\tp] 
  &= A^\tp E_{p(\bm{w}|\lambda)}[\bm{w}\bm{w}^\tp] A \label{eq-inproof:Eww^T = A^T・Eww^T・A}.
\end{align}
Hereafter, we divide $A$ into two patterns and evaluate the above equation.
\begin{enumerate}
  \item [(i)] 
  Consider the case where $A = S_{(i)} \in \mathcal{S}$, wherein $S_{(i)}$ is a diagonal matrix with the $i$-th diagonal element being $-1$ and all other diagonal elements being $+1$, and $i \in \{1,2,\ldots,m\}$.
  Note that $S_{(i)}^{-1} = S_{(i)}^\tp = S_{(i)}$.
  Plugging $S_{(i)}$ into Eq. \eqref{eq-inproof:Eww^T = A^T・Eww^T・A} yields
  \begin{align*}
    E_{p(\bm{w}|\lambda)}[\bm{w}\bm{w}^\tp] = S_{(i)} E_{p(\bm{w}|\lambda)}[\bm{w}\bm{w}^\tp] S_{(i)}
  \end{align*}
  Upon analyzing the rows and columns subjected to sign inversions by $S_{(i)}$, we derive
  \begin{align*}
    \left[E_{p(\bm{w}|\lambda)}[\bm{w}\bm{w}^\tp]\right]_{ij} 
    &= (-1)\left[E_{p(\bm{w}|\lambda)}[\bm{w}\bm{w}^\tp]\right]_{ij} \quad (j\neq i), 
  \end{align*}
  where $j \in \{1,2,\ldots,m\}$. 
  This results in
  \begin{align*}
    \left[E_{p(\bm{w}|\lambda)}[\bm{w}\bm{w}^\tp]\right]_{ij} 
    = 0 \quad (j\neq i).
  \end{align*}


  \item [(ii)]
  Consider the case where $A = P_{(i,j)} \in \mathcal{P}$, wherein $P_{(i,j)}$ is the matrix obtained by swapping the $i$-th and $j$-th columns of the identity matrix, and $i, j \in \{1,2,\ldots,m\}$.
  Note that $P_{(i,j)}^{-1} = P_{(i,j)}^\tp = P_{(i,j)}$.
  Inserting $P_{(i,j)}$ into Eq. \eqref{eq-inproof:Eww^T = A^T・Eww^T・A} gives
  \begin{align*}
    E_{p(\bm{w}|\lambda)}[\bm{w}\bm{w}^\tp] = P_{(i,j)} E_{p(\bm{w}|\lambda)}[\bm{w}\bm{w}^\tp] P_{(i,j)}.
  \end{align*}
  Inspecting the diagonal components replaced by $P_{(i,j)}$, we obtain
  \begin{align*}
    \left[E_{p(\bm{w}|\lambda)}[\bm{w}\bm{w}^\tp]\right]_{ii} 
    &= \left[E_{p(\bm{w}|\lambda)}[\bm{w}\bm{w}^\tp]\right]_{jj}.
  \end{align*}
\end{enumerate}
The arguments (i) and (ii) indicate that $E_{p(\bm{w}|\lambda)}[\bm{w}\bm{w}^\tp] = \sigma_{\bm{w}}^2 I$.


\end{document}